\newtheorem{definition}{Definition}
\newtheorem*{remark}{Remark}
\newtheorem*{reshyp}{Resolvent Hypothesis}
\newtheorem{theorem}{Theorem}
\newtheorem{invprob}{Inverse Problem}
\newtheorem*{invprob*}{Inverse Problem}
\newtheorem{lemma}{Lemma}
\newtheorem{prop}{Proposition}
\newtheorem{ipr}{Inverse Problem}
\newcommand{\bc}{\textsc{bc}{ }}
\newcommand{\bvp}{\textsc{bvp}{ }}
\newcommand{\ep}{\varepsilon}
\newcommand{\dee}{{\rm{\textnormal{d}}}}
\newcommand{\dn}{\textsc{dn}{ }}
\newcommand{\Gc}{{\Gamma_{\rm{\scriptsize{\textnormal{c}}}}}}
\newcommand{\Gi}{{\Gamma_{\rm{\scriptsize{\textnormal{i}}}}}}
\newcommand{\IM}{{\rm{\textnormal{Im}}}}
\newcommand{\Int}{\rm{\textnormal{int}}}
\newcommand{\lambar}{\overline\lambda}       		%lambda bar
\newcommand{\lan}{\langle}
\newcommand{\loc}{\rm{\scriptsize{\textnormal{loc}}}}
\newcommand{\pd}{\partial}
\newcommand{\ph}{\varphi}
\newcommand{\ran}{\rangle}
\newcommand{\RE}{{\rm{\textnormal{Re}}}}
\newcommand{\rstr}{\restriction}
\newcommand{\supp}{\rm{\textnormal{supp}}}
\newcommand{\thet}{\vartheta}
\newcommand{\vb}{\mathbf{b}}
\newcommand{\vh}{\mathbf{h}}
\newcommand{\vx}{\mathbf{x}}
\renewcommand{\emph}{\textsl}
\title[A boundary-singular inverse problem]{A boundary-singular two-dimensional partial data inverse problem}
\author{Freddy J. F. Symons}
\email{\href{mailto:symonsfj@gmail.com}{\nolinkurl{symonsfj@gmail.com}}}
\subjclass[2010]{ %34B20, % Weyl theory and its generalizations
	%34B24, % Sturm-Liouville theory
%	34L05, % General spectral theory
	34L15, % Eigenvalues, estimation of eigenvalues, upper and lower bounds
	34L20, % Asymptotic distribution of eigenvalues, asymptotic theory of eigenfunctions
	%35J10, % Schrodinger op.
	%35J15, % Second-order elliptic eqs.
	35J25, % BVPs for second-order elliptic eqs.
	35P15, % Estimation of eigenvalues, upper and lower bounds
	35P20, % Asymptotic distribution of eigenvalues and eigenfunctions
	35Q40, % PDEs in connection with quantum mechanics
	%35S05, % Pseudodifferential operators
	35R30% Inverse problems
}
\keywords{\textsc{pde}s, inverse problem, eigenvalue asymptotics, Dirichlet-to-Neumann operator, spectral theory}
\begin{document}
\maketitle

\begin{abstract}
	We consider uniqueness in an inverse Schr\"odinger problem in a bounded domain in $\mathbb{R}^2$ given the Dirichlet-to-Neumann map on part of the boundary. On the remaining boundary we impose a new type of singular boundary condition with unknown parameter. Owing to recent results on this class of boundary conditions, we discuss the necessity of an extra point condition to well-define the data for the inverse problem. Our results are two-fold. At a \emph{single} frequency the inverse problem displays non-uniqueness, since an unknown boundary condition can spoil ``seeing'' the Schr\"odinger potential \emph{via} the Dirichlet-to-Neumann map. On the other hand, taking as input data the Dirichlet-to-Neumann map at \emph{every} frequency $\lambda\in\mathbb{R}$ for which it is well-defined yields full uniqueness of the potential and all the boundary conditions. We adapt recent methods in related two-dimensional inverse problems and develop new techniques to cope with the singularity in the boundary condition.
\end{abstract}

\tableofcontents

\section{Introduction}
\label{secintro}

Let $\Omega\subset\mathbb{R}^d$ be an open and bounded set with dimension $d\geq2$, whose boundary $\pd\Omega=\overline\Omega\setminus\Omega$ is a connected piecewise $C^1$ manifold. Separate $\pd\Omega$ into two portions $\Gamma\neq\emptyset$ and $\Gc$, each topologically connected, open with respect to the manifold topology, and chosen so that
\begin{align}
\Gamma\cap\Gc=\emptyset\qquad\rm{ and }\qquad\overline{\Gamma\cup\Gc}=\pd\Omega.\nonumber
\end{align}

Let $q\in L^\infty(\Omega)$ and $f\in C^1(\Gc)$ both be real-valued.
Consider for each $\lambda\in\mathbb{C}$ the Dirichlet-to-Neumann (\textsc{dn}) map $\Lambda_{q,f}(\lambda):H^{1/2}(\Gamma)\rightarrow H^{-1/2}(\Gamma)$ mapping $g\mapsto-\pd_\nu u\rstr_{\Gamma}$ where
$\pd_\nu$ denotes the outward directed normal derivative and $u\in L^2(\Omega)$ solves the boundary-value problem (\textsc{bvp})
\begin{equation}
\label{eqnSchrodinger}
	\left\{\begin{array}{ccccc}
		-\Delta u+qu&=&\lambda u&\rm{in}&\Omega,\\
		u+f\pd_\nu u&=&0&\rm{on}&\Gc,\\
		u			&=&g&\rm{on}&\Gamma.
	\end{array}\right.
\end{equation}
The standard inverse Schr\"odinger problem is to try to recover $q$ from the operator $\Lambda_{q,0}(0)$. In this paper we examine uniqueness in the problem of recovery of both $q$ and $f$ when $f$ is non-zero.

Owing to the results in \cite{marlrozen2009} (see also \cite{berrydennis2008,berry2009}), in dimension $d=2$, if $f$ possesses a simple zero (and otherwise has $0$ as an isolated value in its range) then the operator underlying \eqref{eqnSchrodinger}
is not self-adjoint and its eigenvalues fill $\mathbb{C}$. This means that for every $\lambda\in\mathbb{C}$ the solution of \eqref{eqnSchrodinger} is not uniquely specified so the \dn map $\Lambda_{q,f}(\lambda)$ is ill-defined.

One fixes this by imposing a one-parameter point boundary condition (\textsc{bc}) at the zero $\vx_0$ of $f$. A certain class of these \textsc{bc}s---arising from limit-circle considerations of certain ordinary differential operators (\textsc{odo}s)---yields self-adjoint restrictions of the operator associated with \eqref{eqnSchrodinger}. We denote this \bc (see Definition \ref{defsabc} shortly) by
\begin{equation*}
	\beta_\thet[u]=0\quad\bigl(\thet\in(-{\pi/2},{\pi/2})\bigr)
\end{equation*}
for any solution $u$ of \eqref{eqnSchrodinger}. Here $\thet$ is the polar angle at the point $\vx_0$ and $\beta\in\mathbb{R}$ a parameter. We can then well define the new \dn operator $\Lambda_{q,f,\beta}(\lambda):g\mapsto-\pd_\nu u\rstr_{\Gamma}$ for any $g\in H^{1/2}(\Gamma)$ such that $u$ solves
	\begin{equation}
	\label{eqnspectralSchro}
		\left\{\begin{array}{ccccc}
			(-\Delta+q)u	&=&\lambda u&\rm{in}&\Omega, \\
			u-f\pd_\nu u	&=&0		&\rm{on}&\Gc, \\
			\beta_\thet{[u]}&=&0		&\rm{at}&\vx_0,\\
			u				&=&g		&\rm{on}&\Gamma.
		\end{array}\right.
	\end{equation}
	The inverse problem we examine is thus:
\begin{invprob}
	\label{invqfbeta}
		Given the \dn map $\Lambda_{q,f,\beta}(\lambda)$ for some or all $\lambda$, recover the potential $q$ everywhere in $\Omega$, the function $f$ on $\Gc$ and the \bc parameter $\beta$.
\end{invprob}

With a (non-singular) Dirichlet condition on $\Gc$, this class of inverse problem---often in ``conductivity form'' with \dn map $\Lambda^{\rm{\scriptsize{c}}}_{\gamma}:h\mapsto-\gamma\pd_\nu v\rstr_{\Gamma}~\big(h\in H^{1/2}(\Gamma)\big)$ for the \bvp
\begin{equation}
\label{eqncond}
	\left\{\begin{array}{ccccc}
		-\nabla\cdot(\gamma\nabla v)&=&0&\rm{in}&\Omega,\\
		v							&=&0&\rm{on}&\Gc,\\
		v							&=&h&\rm{on}&\Gamma
	\end{array}\right.
	\nonumber
\end{equation}
---has seen plentiful attention. Calder\'on first proposed this problem of electrical prospection in the `80s \cite{calderon1980} inspired by his engineering work for the Argentine state oil company. Initial analysis was in simplified situations \cite{kohnvoge1984,kohnvoge1985}, and the first proof of uniqueness of $\gamma\in C^{\infty}(\overline\Omega)$ in dimension $d\geq3$ was then given in \cite{sylvuhlm1986,sylvuhlm1987}. Arguably most of the important subsequent activity on uniqueness can be found in the references \cite{alessandrini1990,brown1996,browtorr2003,browuhlm1997,bukhguhlm2002,haberman2015,habertatar2013,kenigsjosuhlm2007,nachman1996,paivpanchuhl2003}, covering full data in dimension $d\geq2$ and partial data in dimension $d\geq3$, and culminating in \cite{astapaiv2006,carorogers2016,isakov2006}. For brevity here, we direct to the latter three and \cite[Sec. 2.4 \& Ch. 4]{symons2017} for a more detailed historical exposition.

In the more relevant case of partial data in two dimensions the only progress lies in \cite{imanuhlyama2010,imanuhlyama2011,guilltzou2011,imanuhlyama2015}. Owing to their greatest simplicity we will focus on adapting the methods in \cite{imanuhlyama2010}. We also mention that for the singular case \eqref{eqnspectralSchro} in a symmetric half-disc geometry
%(specifically, when $\Omega=\Omega_1$, specified in the next section)
uniqueness of a radially symmetric $q\in L^\infty_{\loc}(0,1]$ at the single frequency $\lambda=0$, given known $f$ and $\beta=0$, was proved in \cite[Sec. 4]{browmarlsymo2016}.

\section{Main results and outline}
\label{secprobdef}

To state our main results precisely we first need to make some definitions.

\begin{definition}[Specially decomposable domain]
\label{defdomain}
	We call a bounded two-dimensional simply connected open $\Omega$ a \emph{specially decomposable domain} if it has a $C^\infty$ boundary and can be written as
	\begin{align}
		\Omega=\Int\big(\overline{\Omega_1\cup\Omega_0}\big),
			\nonumber
	\end{align}
	where $\Omega_1$ is a half-disc of radius $1$ whose straight edge---i.e., diameter---is denoted $\Gamma_1$ and is contained in $\pd\Omega$.
\end{definition}
\begin{definition}[Boundary accessibility]
\label{defaccessibility}
	Letting $\Omega$ be a specially decomposable domain, specify the further boundary decomposition $\pd\Omega=\overline{\Gamma\cup\Gc}$, $\Gamma\cap\Gc=\emptyset$ such that the diameter $\Gamma_1\subset\Gc$, and $\Gamma$ and $\Gc$ are both relatively open. The boundary portions $\Gamma$ and $\Gc$ are called, respectively, \emph{accessible} and \emph{inaccessible}.
\end{definition}
\begin{remark}
	For convenience the coordinates of $\mathbb{R}^2$ are specified so that $\Gamma_1$ aligns with the $y$-axis, the mid-point of $\Gamma_1$ is $0$, and the subdomain $\Omega_1$ is in the right half-plane.
	Moreover we will denote by $\Gamma_0$ the (possibly disjoint) portion of the boundary given by $\Gc\setminus\Gamma_1$, and by $\Gi$ the \emph{interface} between $\Omega_0$ and $\Omega_1$, i.e., $\Gi = \left(\overline{\Omega_1}\cap\overline{\Omega_0}\right)\setminus\pd\Omega$.
\end{remark}

	\begin{center}
	\begin{figure}[h]
		\centering
		\begin{tikzpicture}
		\draw [-,black] (-2,1) to[out=90,in=135] (0,1) to[out=-45,in=180] (2,1) to[out=0,in=20] (2,-1) to[out=200,in=270] (-2,-1) to (-2,1);
		\draw [-,black,domain=-90:90,shift={(-2cm,0cm)}] plot ({cos(\x)}, {sin(\x)});
		\draw [thick,black] (-2.1,1) to (-1.9,1);
		\draw [thick,black] (-2.1,-1) to (-1.9,-1);
		\draw [thick,black] (2,1.1) to (2,0.9);
		\draw [thick,black] (2.04,-1.08) to (1.96,-0.92);
		\fill (-2,0) circle (1pt);
		\node[right] at (-1,0) {$\Gi$};
		\node[left] at (-2,0) {$0$};
		\node[left] at (-2,0.75) {$\Gamma_1$};
		\node[above right] at (0,1) {$\Gamma_0$};
		\node[below right] at (0,-1.6) {$\Gamma_0$};
		\node[right] at (2.4,1) {$\Gamma$};
		\node[right] at (-2,0.5) {$\Omega_1$};
		\node[below left] at (0,1) {$\Omega_0$};
		\end{tikzpicture}
		\caption{an example domain $\Omega=$ int$(\overline{\Omega_1\cup\Omega_0})$}
		\label{figdomain}
	\end{figure}
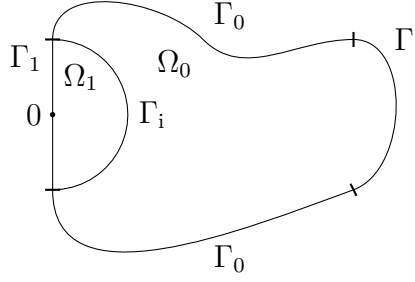
	\end{center}

\begin{definition}[Singular boundary condition]
\label{defsingbc}
	The \emph{Berry--Dennis boundary condition} on $\Omega$ is the requirement on a given $u:\Omega\rightarrow\mathbb{C}$ that
	\begin{equation}
	\label{eqnBDbc}
		(u-f\pd_\nu u)\rstr_{\Gc}=0,
	\end{equation}
	where $f$ is a bounded, real-valued, a.e. absolutely continuous function possessing only one strictly simple zero at the point $0$, and satisfying that for every $\vx\in\Gc\setminus\{0\}$ and $\ep>0$ we have $0$ either not in the range of $f\rstr_{\Gc\setminus\{\vx~:~|\vx|<\ep\}}$, or an isolated value in this range. For the same reasons as in \cite{marlrozen2009} we need $f$ to be linear on the straight edge $\Gamma_1$: we assume $\exists{b}>0$ such that
	\begin{equation}
	\label{eqnfgamma1}
		f(y)=-{b} y\qquad\big(y\in(-1,1)\big),
	\end{equation}
	enforcing a simple zero of $f$ at $0$. Such an $f$ is called an \emph{admissible boundary function}.\footnote{In \cite{marlrozen2009} the parameter ``$\ep$'' is used instead of $b$. We are not interested in its limit approaching $0$, so we use the latter notation.}
\end{definition}

From this definition it is clear that such an $f$ may discontinuously become $0$ or be identically $0$ in a connected subset of $\Gc$.

	\begin{definition}[Self-adjoint boundary condition]
	\label{defsabc}
	
		Let $r=|\vx|,~\thet=\arg(\vx)$ be the usual polar coordinates about $0$. Denoting by $[\cdot,\cdot]$ the Lagrange bracket in $\mathbb{R}^2$, we define an \emph{admissible self-adjoint boundary condition}, with real parameter $\beta$, by
		\begin{align}
		\label{eqnbetabc}
			\beta_\thet[u] &:= \lim_{r\rightarrow0}[u,u_0+\beta v_0](r,\thet)\nonumber\\&=\lim_{r\rightarrow0}\big(u(r,\thet)r\pd_r(u_0+\beta v_0)(r,\thet)-(r\pd_r u(r,\thet))(u_0+\beta v_0)(r,\thet)\big)\nonumber\\&=0,
		\end{align}
		where $u_0$ and $v_0$ are the solutions to
		\begin{align}
			\left\{\begin{array}{ccccc}
			-\Delta u	&=&0&\rm{in }&\Omega_1, \\
			u-f\pd_\nu u&=&0&\rm{on }&\Gamma_1
			\end{array}\right.\nonumber
		\end{align}
		given in \textnormal{\eqref{eqnu0v0}} and $f$ is an admissible boundary function.
	\end{definition}
	\begin{definition}[Admissible potential]
		A function $q\in L^\infty(\Omega)$ is called an \emph{admissible potential}
		if it is locally radially symmetric about $0$, i.e., there is a $\delta>0$ such that in the ball $r<\delta$ we have $\pd_\thet q(r,\thet)=0$. Without loss of generality we can assume $q\rstr_{\Omega_1}$ is radially symmetric, since we can rescale the coordinates so that $\delta=|\Gamma_1|/2=1$.
	\end{definition}
	
Our main results are as follows:	
	\begin{theorem}[Conditional uniqueness at one frequency]
	\label{thmquniqsingbdry}

		Let $\alpha>0$, $\beta\in\mathbb{R}$, and $\Omega$ be specially decomposable.
		\begin{enumerate}[label=(\roman*)]
		\item Suppose $q_1,q_2\in C^{2+\alpha}(\overline\Omega)$ are admissible potentials with $q_1=q_2$ in a neighbourhood of the boundary $\pd\Omega$ and $f$ is an admissible boundary function. If the \dn maps
		are equal at the frequency $\lambda=0$---i.e., $\Lambda_{q_1,f,\beta}(0)=\Lambda_{q_2,f,\beta}(0)$---then $q_1=q_2$ in all of $\Omega$.
		\item Conversely, take admissible boundary functions $f_1$ and $f_2$. Let $q\in C^{2+\alpha}(\overline\Omega)$ be an admissible potential. If $\Lambda_{q,f_1,\beta}(0)=\Lambda_{q,f_2,\beta}(0)$ then $f_1=f_2$.
		\end{enumerate}
	\end{theorem}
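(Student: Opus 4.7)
The plan is to adapt the partial-data \textsc{cgo} framework of \cite{imanuhlyama2010} to the present singular Robin setting, with the main task being to neutralize the contributions of the Berry--Dennis condition \eqref{eqnBDbc} on $\Gc$ and the Lagrange-bracket condition \eqref{eqnbetabc} at the singular vertex $0$.

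For part~(i), let $u_1,u_2$ solve \eqref{eqnspectralSchro} at $\lambda=0$ for $q_1,q_2$, with common Dirichlet data $g$ on $\Gamma$ and common $f,\beta$, and set $w=u_1-u_2$. Equality of the \dn maps gives $w|_{\Gamma}=\pd_\nu w|_{\Gamma}=0$, while linearity of \eqref{eqnBDbc} and of the Lagrange bracket yields $w-f\pd_\nu w=0$ on $\Gc$ and $\beta_\thet[w]=0$. Testing against any solution $v$ of $(-\Delta+q_1)v=0$ in $\Omega$ that satisfies the \emph{same} Berry--Dennis condition and self-adjoint point condition, Green's identity applied on $\Omega\setminus B_\ep(0)$ eliminates the $\Gamma$-contribution (from the vanishing of $w,\pd_\nu w$), the $\Gc$-contribution (from $v-f\pd_\nu v=0$), and, on letting $\ep\to 0$, the $\pd B_\ep(0)\cap\Omega$ contribution---the last vanishing because the limit-circle framework of \cite{marlrozen2009} forces the Lagrange bracket $[w,v](r,\thet)\to 0$ as $r\to 0$ once $\beta_\thet[w]=\beta_\thet[v]=0$ share the same $u_0+\beta v_0$ reference pair. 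This produces the orthogonality identity
\begin{equation*}
    \int_\Omega (q_1-q_2)\,u_2\, v\, d\vx = 0.
\end{equation*}

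The crux is then producing enough admissible test solutions $v$. Away from $0$, interior \textsc{cgo}s of the form $v=e^{\tau\Phi(z)/2}(a+r_\tau)$---with holomorphic phase $\Phi$ carrying a single nondegenerate critical point $z_0\in\Omega$ and $\Re\Phi$ arranged to be maximized only on (a prescribed portion of) $\Gamma$---can be constructed exactly as in \cite[\S3]{imanuhlyama2010}; the hypothesis that $q_1=q_2$ near $\pd\Omega$ is what permits the standard boundary extension underlying that construction. The novelty is to coerce these \textsc{cgo}s into the admissible class: the specially decomposable geometry, the radial admissibility of $q_1$ on $\Omega_1$, and the linearity \eqref{eqnfgamma1} of $f$ on $\Gamma_1$ allow a radial separation of variables on $\Omega_1$ in the spirit of \cite{marlrozen2009}, supplying a correction term that puts the \textsc{cgo}s in the required self-adjoint class without disturbing the leading-order phase. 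Stationary phase as $\tau\to\infty$ then extracts $(q_1-q_2)(z_0)=0$, and sliding $z_0$ through the interior of $\Omega_0$, together with the radial one-dimensional Borg--Levinson argument of \cite[Sec.~4]{browmarlsymo2016} on $\Omega_1$, delivers $q_1\equiv q_2$.

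Part~(ii), with $q_1=q_2=q$, follows along identical lines: the orthogonality identity collapses to
\begin{equation*}
    \int_{\Gc}(f_1-f_2)\,(\pd_\nu u_1)(\pd_\nu v)\, ds=0,
\end{equation*}
and boundary-concentrated \textsc{cgo}s pinned at any $z_0\in\Gc\setminus\{0\}$ yield $(f_1-f_2)(z_0)=0$ via boundary stationary phase; varying $z_0$ over $\Gc\setminus\{0\}$ and invoking Definition~\ref{defsingbc} (the simple zero of $f_j$ at $0$ and $C^1$-regularity there) concludes $f_1\equiv f_2$. The principal obstacle in both parts is the reconciliation of the interior \textsc{cgo} ansatz---which is global and built in Cartesian charts without regard to any boundary condition---with the limit-circle behavior demanded by \eqref{eqnbetabc}. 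Establishing that the $\pd B_\ep(0)$-remainder decays uniformly in $\tau$, so that the stationary-phase limit $\tau\to\infty$ commutes with $\ep\to 0$, and that the Berry--Dennis-correcting term added to the \textsc{cgo}s does not overwhelm the leading-order stationary-phase contribution, is where the bulk of genuinely new technical work is expected to lie.
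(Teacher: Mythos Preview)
Your Green-formula skeleton for part~(i)---integrate by parts on $\Omega\setminus B_\ep(0)$, kill the $\Gc$-term via the shared Robin condition, and kill the $\pd B_\ep$-term via the Lagrange-bracket determinant identity---matches the paper's Lemma~\ref{lemGreentypeformula} exactly. The divergence, and the gap, is in what comes next. You propose to coerce the \textsc{cgo} solutions themselves into the singular admissible class by adding a radial correction on $\Omega_1$, and you rightly flag that the uniformity of this correction in $\tau$ is where ``the bulk of genuinely new technical work'' lies. The paper sidesteps this entirely: it never modifies the \textsc{cgo}s. It takes $v_1,v_2$ from Lemma~\ref{lemIUYCGO} exactly as in \cite{imanuhlyama2010}---with \emph{Dirichlet} data on $\Gc$ and no point condition at $0$---and bridges to them by the density Lemma~\ref{lemdensity}: the solutions $\tilde K$ satisfying the Berry--Dennis and $\beta$-conditions are $L^2(\Omega')$-dense in the full solution space $K$. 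Since $q_1-q_2$ is supported in $\Omega'$, the Alessandrini identity $\int_\Omega(q_1-q_2)u_1u_2=0$ for $u_j\in\tilde K$ passes in the limit to $\int_\Omega(q_1-q_2)v_1v_2=0$ for the unmodified \textsc{cgo}s, after which the proof is verbatim \cite[Prop.~4.1 and Thm.~1.1]{imanuhlyama2010}. No Borg--Levinson argument on $\Omega_1$ is used. Your proposed correction step is not merely harder---it is not clear it can be done at all, since the phase $e^{\tau\Phi}$ has no radial structure near $0$ and there is no evident reason a separation-of-variables patch should stay $o(\tau^{-1})$ in $L^2$; this is precisely the obstacle the density lemma is engineered to avoid.

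For part~(ii) your boundary-stationary-phase plan is far more than is needed. The paper's argument is three lines of unique continuation: for any single $g$, equality of the \dn maps forces $u:=u_1-u_2$ to have vanishing Cauchy data on $\Gamma$; Lemma~\ref{lemuniqcont}\ref{lemuniqcontbdry} then gives $u\equiv0$ in $\Omega$, so $\pd_\nu u_1=\pd_\nu u_2$ on $\Gc$, whence $f_1=u_1/\pd_\nu u_1=u_2/\pd_\nu u_2=f_2$ pointwise. No \textsc{cgo}s, no integral identity.
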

	\begin{remark}
		This immediately implies that if the \dn maps $\Lambda_{q_j,f_j,\beta}(0)$ are equal and $q_1=q_2$ in a neighbourhood of $\pd\Omega$ then $q_1=q_2$ everywhere if and only if $f_1=f_2$ everywhere, i.e., the \bc may cloak the potential. 
	\end{remark}
	\begin{theorem}[Uniqueness at all frequencies]
	\label{thmuniqallfreq}
		Let $\Omega$ be specially decomposable, $\alpha>0$,
		$q_1,q_2\in C^{2+\alpha}(\overline\Omega)$ admissible potentials
		that are equal in some neighbourhood of the boundary $\pd\Omega$,
		$f_1$ and $f_2$ admissible boundary functions \emph{supported in the straight edge} $\Gamma_1$, and $\beta_1,\beta_2\in\mathbb{R}$ admissible self-adjoint \textsc{bc}s at $0$.

		If $\Lambda_{q_1,f_1,\beta_1}(\lambda)=\Lambda_{q_2,f_2,\beta_2}(\lambda)$ at every $\lambda\in\mathbb{R}$ for which both \dn maps are defined then $q_1=q_2$, $f_1=f_2$ and $\beta_1=\beta_2$.
	\end{theorem}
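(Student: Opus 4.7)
The strategy is to convert the all-frequency hypothesis into full boundary spectral data on $\Gamma$, then use the special geometry of $\Omega_1$ to peel off $b_j$ and $\beta_j$, and finally finish by a single-frequency application of Theorem~\ref{thmquniqsingbdry}.

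First, the map $\lambda\mapsto\Lambda_{q_j,f_j,\beta_j}(\lambda)$ extends meromorphically to all of $\mathbb{C}$ by a standard Fredholm-parametrix argument, its poles being exactly those $\lambda$ at which the \bvp \eqref{eqnspectralSchro} loses unique solvability. Equality on all real $\lambda$ off a discrete set therefore forces equality as meromorphic operator-valued functions. By the partial-data unique continuation already used in the proof of Theorem~\ref{thmquniqsingbdry}, no eigenfunction of the self-adjoint realisation $A_j:=-\Delta+q_j$ (carrying the \textsc{bc}s of \eqref{eqnspectralSchro}) can have vanishing normal trace on $\Gamma$; the poles of $\Lambda_j$ thus coincide with $\sigma(A_j)$, and the residue at each $\lambda_n$ is a rank-$m_n$ operator built from $\partial_\nu\phi_n^{(j)}\rstr_\Gamma$, where $\{\phi_n^{(j)}\}$ is an orthonormal eigenbasis. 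Matching poles and residues yields
\[
\sigma(A_1)=\sigma(A_2),\qquad \{\partial_\nu\phi_n^{(1)}\rstr_\Gamma\}=\{\partial_\nu\phi_n^{(2)}\rstr_\Gamma\},
\]
modulo the unitary freedom on each eigenspace.

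The bulk of the work is then to convert these spectral data into equality of $f_j$ and $\beta_j$. Exploiting the radial symmetry of $q_j\rstr_{\Omega_1}$ and the linearity $f_j(y)=-b_j y$ on $\Gamma_1$, I would separate variables in $\Omega_1$. The angular equation $-\phi''=\mu\phi$ on $(-\pi/2,\pi/2)$ with Robin data $\phi+b_j\phi'=0$ at $\theta=\pm\pi/2$ produces a $b_j$-indexed sequence $\{\mu_k(b_j)\}$; those modes with $\mu_k<1$ put the associated radial equation into limit circle at $0$, where $\beta_j$ fixes the self-adjoint extension. Matching across the interface $\Gi$ to the Dirichlet problem in $\Omega_0$---whose Cauchy data on $\Gamma$ are themselves determined by $\Lambda_j$ and by the common potential near $\partial\Omega$ via unique continuation---produces a mode-wise Herglotz $m$-function for each $k$. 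A Weyl-counting / asymptotic-density argument in the spirit of \cite{marlrozen2009} demultiplexes the aggregated spectrum into angular labels, so that the isospectrality reduces mode by mode to a one-dimensional inverse Sturm--Liouville problem with one limit-circle endpoint; classical theory then recovers the radial potential together with both $b_j$ and $\beta_j$. This demultiplexing across the singular endpoint is the main obstacle, and precisely where new techniques adapted to the singularity have to be developed.

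Finally, with $f_1=f_2$ and $\beta_1=\beta_2$ established, pick any real $\lambda_0\notin\sigma(A_1)$, at which both \dn maps are well-defined and equal. The proof of Theorem~\ref{thmquniqsingbdry}(i) extends with no structural change from $\lambda=0$ to any such $\lambda_0$ via the substitution $q\mapsto q-\lambda_0$, which preserves the admissibility class; applying it at $\lambda_0$ yields $q_1=q_2$ throughout $\Omega$, completing the argument.
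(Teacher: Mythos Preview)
Your strategy points in the right direction---extract $b$ and $\beta$ from spectral information, then finish with Theorem~\ref{thmquniqsingbdry}---but the middle portion is both overcomplicated and contains the gap you yourself flag. You propose to demultiplex the aggregated spectrum into angular modes and then solve, mode by mode, one-dimensional inverse Sturm--Liouville problems to recover $b_j$, $\beta_j$ and the radial potential. The demultiplexing is declared ``the main obstacle'' without an argument, and it is genuinely circular: the angular separation constants $\mu_k(b_j)$ depend on the unknown $b_j$, so one cannot sort eigenvalues by mode before knowing $b_j$. Even granting that step, invoking ``classical theory'' for the resulting radial problems is optimistic: the $n=0$ radial equation is limit-circle at $0$ with a singular coefficient $-1/(b^2 r^2)$ whose strength is itself unknown, and a single $m$-function does not in general determine both a potential and a limit-circle boundary parameter. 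Finally, the residue matching to recover eigenfunction normal traces on $\Gamma$ is extraneous---none of that data is needed.

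The paper bypasses all of this. From the all-frequency hypothesis one needs only the \emph{locations of the poles} of $\Lambda_{q_j,f_j,\beta_j}$, i.e., the common eigenvalue sequence $\{\lambda_n\}$ of the underlying self-adjoint operators. The entire extraction of $b$ and $\beta$ is then a two-line consequence of Theorem~\ref{thmnegevasympsgen}, which gives
\[
\lambda_n=-{\rm e}^{-2b(\thet_0+\tan^{-1}\beta)}{\rm e}^{-2n\pi b}\bigl(1+o(1)\bigr)\qquad(n\to-\infty),
\]
with $\thet_0$ a computable constant. One reads off $b$ from $-\log(-\lambda_n)/(2n\pi)\to b$, hence $f_1=f_2$ (both are linear on $\Gamma_1$ and supported there); with $b$ in hand the prefactor yields $\tan^{-1}\beta$, hence $\beta_1=\beta_2$. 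Only then is Theorem~\ref{thmquniqsingbdry} invoked at a single frequency, exactly as in your final step. No interface matching, no mode-by-mode inverse problems, and no eigenfunction traces are used here; the hard analysis is entirely packaged into the negative-eigenvalue asymptotics of Theorem~\ref{thmnegevasympsgen}, which is where the ``new techniques adapted to the singularity'' actually live.
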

	We will prove Theorem \ref{thmquniqsingbdry} by adapting various existing approaches to the situation with our singular \textsc{bc}. The proof of Theorem \ref{thmuniqallfreq} will then apply this conditional uniqueness after extraction of the parameters $b$ (equivalently $f$) and $\beta$ from the following result.
	\begin{theorem}[Negative eigenvalue asymptotics]
		\label{thmnegevasympsgen}
		Let $\Omega$ be specially decomposable, $q$ an admissible potential, $f$ an admissible boundary function that is supported in $\Gamma_1$, and $\beta\in\mathbb{R}$ parameterise a self-adjoint \textsc{bc} at $0$. Then the self-adjoint operator $T$ underlying \textnormal{\eqref{eqnspectralSchro}} has discrete spectrum accumulating only at $\pm\infty$, and its negative eigenvalues possess the asymptotic expansion
		\begin{align}
		\lambda_n= -{\rm e}^{-2{b}(\thet_0+\tan^{-1}\beta)}{\rm e}^{-2n\pi{b}}\big(1+o(1)\big)\qquad(n\rightarrow-\infty).\nonumber
		\end{align}
		 We define $T$ rigorously in \textnormal{\eqref{eqnopfulldom}}. Here $\thet_0$ is a calculable constant (defined explicitly above equation \textnormal{\eqref{eqnAB}}).
	\end{theorem}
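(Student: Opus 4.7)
The plan is to exploit the special geometry together with the hypotheses ``$q\rstr_{\Omega_1}$ radially symmetric'' and ``$f(y)=-by$ on $\Gamma_1$'' to separate variables on the half-disc $\Omega_1$ in polar coordinates $(r,\theta)$ centred at $0$. A direct calculation transforms the Berry--Dennis condition on $\Gamma_1$ into the Robin form $u+b\pd_\theta u=0$ at $\theta=\pm\pi/2$, so the angular problem $-\Theta''=\mu\Theta$ is self-adjoint on $(-\pi/2,\pi/2)$ and its spectrum (by elementary computation) is $\{n^2:n\geq1\}\cup\{-1/b^2\}$; the exceptional mode $\mu_*=-1/b^2$ carries the eigenfunction $\Theta_*(\theta)={\rm e}^{-\theta/b}$. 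This mirrors the half-strip analysis of \cite{marlrozen2009}.

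Only the exceptional sector should contribute eigenvalues accumulating at $-\infty$. For each positive mode $\mu_n=n^2$ the radial operator on $(0,1)$ is limit-point at $0$ and, coupled at $r=1$ to the regular Schr\"odinger operator on $\Omega_0$, has only a positive discrete spectrum. In the exceptional sector the radial operator takes the form
\begin{equation*}
	L_*=-\frac{1}{r}\frac{\dee}{\dee r}\!\left(r\frac{\dee}{\dee r}\right)+q(r)-\frac{1}{b^2r^2}\qquad\textnormal{on }(0,1),
\end{equation*}
whose attractive inverse-square potential yields the classical ``fall to the centre'': the endpoint $r=0$ is limit-circle and $L_*$ admits a sequence of negative eigenvalues tending geometrically to $-\infty$. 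Making this decoupling rigorous---either through a Krein-type resolvent formula or via min-max bracketing between the direct sum of sectoral operators and $T$---simultaneously establishes that $\sigma(T)$ is discrete and accumulates only at $\pm\infty$, and that the negative-eigenvalue asymptotics of $T$ coincide with those of $L_*$.

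For $L_*R=\lambda R$ with $\lambda<0$ the two canonical solutions behave like $r^{\pm i/b}=\exp(\pm i\log(r)/b)$ as $r\to0^+$. I would extract the asymptotics through a Pr\"ufer transformation (equivalently, through the modified-Bessel asymptotics of $I_{\pm i/b}(\sqrt{-\lambda}\,r)$), absorbing the bounded $q$ perturbatively since $\lambda\to-\infty$ dominates pointwise on any $r\gtrsim(-\lambda)^{-1/2}$. The self-adjoint point condition $\beta_\thet[u]=0$ selects the real combination $u_0+\beta v_0$, rotating the limit-circle phase at $0$ by $\tan^{-1}\beta$; matching to the outer datum at $r=1$---a $\lambda$-independent constant to leading order, fixed by the behaviour of $q$ and of the rest of $\Omega$---supplies the second phase $\theta_0$. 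Equating the two phases modulo $\pi$ gives an eigenvalue condition of the form $\tfrac{1}{b}\log\sqrt{-\lambda_n}=\theta_0+\tan^{-1}\beta+n\pi+o(1)$, which inverts directly to the stated geometric progression.

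The main obstacle will be the matching step: one must show that the outer phase $\theta_0$ does not drift with $\lambda$ as $-\lambda\to+\infty$, for otherwise the leading exponential would be corrupted. The turning point of $L_*-\lambda$ sits at $r\sim(-\lambda)^{-1/2}$, well separated from the limit-circle regime $r\ll(-\lambda)^{-1/2}$ where $\beta$ alone dictates the phase; a WKB/Pr\"ufer analysis on the outer region, stitched to the prescribed limit-circle behaviour on the inner one, should furnish a uniform control both of $\theta_0$ and of the $o(1)$ remainder. Secondary difficulties are the bounded perturbation $q(r)$ near $r=0$ (which prevents uniform pointwise comparison with the pure Bessel equation) and the rigorous decoupling of the positive-mode sectors from the negative spectrum; both look tractable by standard spectral-theoretic bracketing.
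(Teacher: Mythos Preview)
Your separation-of-variables setup on $\Omega_1$ and identification of the exceptional angular mode $\mu_*=-1/b^2$ are correct and match the paper. The Pr\"ufer/Bessel analysis you sketch for the radial operator $L_*$ on $(0,1)$ with the $\beta$-condition at $0$ is essentially what the paper imports from Marletta--Rozenblum as Lemma~\ref{lemnegevasympssym}, so that ingredient is sound.

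The genuine gap is the step you treat as routine: passing from $L_*$ (or $L'$ on $\Omega_1$) to $T$ on the full domain $\Omega$. Two of your proposed mechanisms do not work here. First, min--max bracketing in its standard form requires semi-boundedness, and $T$ is unbounded below; there is no variational characterisation of the eigenvalues tending to $-\infty$ to bracket against. Second, and more seriously, the ``outer datum at $r=1$'' is not a scalar boundary condition on the \textsc{ode}: matching across the interface $\Gi$ imposes the full \dn map $\Lambda_0(\lambda)$ from $\Omega_0$, which mixes all angular modes and is genuinely $\lambda$-dependent. Your assertion that this reduces to ``a $\lambda$-independent constant to leading order'' is precisely the content that must be proved, and it does not follow from any standard decoupling.

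The paper's route is quite different and is built to avoid both obstacles. It proves a two-sided comparison of negative-eigenvalue counting functions between $T$ and $L'$. For the \emph{lower} bound it constructs pseudo-modes for $T$ by multiplying eigenfunctions of $L'$ by a radial cut-off supported in $\Omega_1$, then uses explicit large-argument Hankel-function asymptotics to show $\|(T-\lambda_n^1)\mu\varphi_n\|/\|\mu\varphi_n\|\to0$. For the \emph{upper} bound it writes the eigenvalue condition for $T$ as the vanishing of a scalar function $\mathcal{E}(\lambda)=m_0(\lambda)+a(\lambda)-\vb(\lambda)^T(M+C)^{-1}(\lambda)\vb(\lambda)$, the Schur complement of the interface \dn pencil $\Lambda_1(\lambda)+\Lambda_0(\lambda)$ in the angular basis; it shows $\mathcal{E}$ is Herglotz (this needs Lemma~\ref{lemHerg} and the analytic-Fredholm argument of Lemma~\ref{lemcorolanalyticFredholm}), and then invokes zero--pole interlacing to get at most one eigenvalue of $T$ between consecutive eigenvalues of $L_0'$. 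The Herglotz/Schur-complement machinery is exactly what replaces the unavailable variational comparison, and is the main new idea you are missing.
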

	We prove Theorem \ref{thmnegevasympsgen} in Section \ref{secfullfrequniq}, but we can apply it immediately with Theorem \ref{thmquniqsingbdry} and prove Theorem \ref{thmuniqallfreq}.
	\begin{proof}[Proof of Theorem \ref{thmuniqallfreq}]
	Note that the map $\Lambda_{q_1,f_1,\beta_1}(\lambda)=\Lambda_{q_2,f_2,\beta_2}(\lambda)$ is an \emph{operator-valued Herglotz function} of $\lambda$ (see the proof of Lemma \ref{lemHerg}). Since we know its behaviour on the real line, we know where its poles lie, \emph{ergo} we know where the eigenvalues $\lambda_n$ of $T$ are. In particular, by Theorem \ref{thmnegevasympsgen} we may deduce that, as $n\rightarrow-\infty$, we have
	\begin{align}
	-\frac{\log(-\lambda_n)}{2n\pi}\rightarrow{b}_1={b}_2=:{b},\nonumber
	\end{align}
	which determines $f_1=f_2$ completely on $\Gamma_1$, where the latter are supported. The constant $\thet_0$ is fixed and may in principle be calculated (see Lemma \ref{lemnegevasympssym}), so in turn we may calculate from Theorem \ref{thmnegevasympsgen} that, as $n\rightarrow-\infty$,
	\begin{align}
	-\frac{\log(-\lambda_n)+2{b}(n\pi+\thet_0)}{2{b}}\rightarrow\tan^{-1}(\beta_1)=\tan^{-1}(\beta_2)=:\tan^{-1}(\beta).\nonumber
	\end{align}
	
	Finally, we apply Theorem \ref{thmquniqsingbdry} to the triples $(q_1,f,\beta)$ and $(q_2,f,\beta)$, with equality of the \dn maps for any fixed $\lambda\in\mathbb{R}$, to deduce that $q_1=q_2$ in $\Omega$.
\end{proof}

	The remaining paper is devoted to proving Theorems \ref{thmquniqsingbdry} and \ref{thmnegevasympsgen}; we structure it as follows. It will be useful to recapitulate the ideas of \cite{marlrozen2009}, which we do in Section \ref{secMarlRozen}. Then we describe in Section \ref{secexistingmethods} the approach in \cite{imanuhlyama2010}, since we will adapt the methods. In the next three sections we will prove Theorem \ref{thmquniqsingbdry}. Our proof utilises unique continuation and density arguments---adapted from \cite{browmarlreye2016} and developed in Section \ref{secuniqcont}---applied alongside our version of the weighted sum of values of $q$ \cite[Prop. 4.1]{imanuhlyama2010}, proved in Section \ref{secweightedsum}. Subsequently in Section \ref{secconduniq} we conclude the proof of Theorem \ref{thmquniqsingbdry} on conditional uniqueness. In Section \ref{secinterfaceDNops} we develop some results on the interface \dn maps on $\Gi$. We apply these in Section \ref{secfullfrequniq} to prove the negative eigenvalue asymptotics of Theorem \ref{thmnegevasympsgen}. We offer a short discussion in Section \ref{secdiscussion}.

\section{Summary of differential operators with singular boundary conditions}
\label{secMarlRozen}

When considering inverse problems for partial differential operators (\textsc{pdo}s) it is important for the \textsc{bvp}s to generate self-adjoint operators. This owes to the requirement of well-definedness for the resulting \dn map. If the operator is not self-adjoint then the solution to \eqref{eqnSchrodinger} might not exist or be non-unique, leaving the \dn map ill-defined. If, as desired, the operator is self-adjoint, then it defines a unitary evolution group (see, e.g., \cite{reedsimon1980}), and consequently---except at points in the spectrum---the associated \bvp has a unique solution, non-zero for an inhomogeneous boundary condition. This well defines a \dn map. Physically this is realised by the quantity of \textsc{bc}s. Too many, e.g., simultaneous Dirichlet and Neumann conditions, yield only a symmetric operator. Too few and one loses even symmetry.

The recent \cite{berrydennis2008,marlrozen2009} explored self-adjointness of second-order differential operators in bounded two-dimensional domains, with \textsc{bc}s singular at discrete points. In \cite{marlrozen2009} $\Omega$ had the Glazman decomposition of Definitions \ref{defdomain} and \ref{defaccessibility}.

Upon considering the operator
\begin{align}
	D(T_0)	&=\{u\in L^2(\Omega)~|~\Delta u\in L^2(\Omega),(u+{b} y\pd_\nu
					u)\rstr_{\Gamma_1}=0=u\rstr_{\pd\Omega\setminus\Gamma_1}\}, \nonumber\\
		T_0u&=-\Delta u,
		\label{eqnMarlettaRozenblumoperator}
\end{align}
they showed that despite its seemingly ``complete'' set of \textsc{bc}s, it is not symmetric. Its adjoint is symmetric and has a one-dimensional deficiency space.

Thus one may specify self-adjoint restrictions of $T_0$ by imposing an extra \textsc{bc} at $0$. It turns out that any linear combination of the functions
\begin{equation}
\label{eqnu0v0}
	u_0(r,\thet)={\rm e}^{-\thet/{b}}\sin\big(\log(r)/{b}\big),\qquad
	v_0(r,\thet)={\rm e}^{-\thet/{b}}\cos\big(\log(r)/{b}\big)
\end{equation}
can be used to specify a \textsc{bc} \emph{via} a (two-dimensional) Lagrange bracket. For example in \cite{marlrozen2009} the \textsc{bc} at $0$ is the requirement on functions $u$ that
\begin{align}
	[u,u_0](r,\thet):=r\big(u\pd_ru_0-(\pd_ru)u_0\big)(r,\thet)\rightarrow0\qquad(r\rightarrow0).
	\nonumber
\end{align}

In general one may take any $\beta\in\mathbb{R}$ and then require
\begin{equation}
\label{eqn2DLagrangebracket}
	[u,u_0+\beta v_0](r,\thet)\rightarrow0\qquad(r\rightarrow0).
\end{equation}
The above considerations are the motivation for Definitions \ref{defsingbc} and \ref{defsabc}.
	
	We now briefly rewrite the key points of \cite{marlrozen2009}  incorporating a real-valued radial Schr\"odinger potential $q(r)\in L^\infty(0,1;\dee r)$.

	This will explain the following corollary to \cite{marlrozen2009} and provide us with some useful tools.
	\begin{prop}[Marletta--Rozenblum, 2009]
	\label{propmarlrozenmain}
		Consider the operator
		\begin{align*}
			D(L)&:=\{u\in L^2(\Omega_1)~|~\Delta u\in L^2(\Omega_1),u\rstr_{\Gi}=0=(u-by\pd_\nu u)\rstr_{\Gamma_1}\},\nonumber\\
			Lu	&:=(-\Delta+q)u.
		\end{align*}
		There exist on $L^2(0,1;r\dee r)$ ordinary differential operators $L_n~(n\geq0)$ regular at $1$ for which $L$ is (equivalent to) the direct sum of operators $\oplus_{n=0}^\infty L_n$. The $L_n~(n\geq1)$ are all limit-point at $0$ and self-adjoint; $L_0$ is limit-circle at $0$. The self-adjoint restrictions $L_0'$ of $L_0$ are generated by a one-dimensional Lagrange bracket \bc with real parameter $\beta$. On $L^2(\Omega_1)$ the \textsc{pdo}
		\begin{equation}
		\label{eqnLprimedecomp}
			L':=L_0'\oplus\bigoplus\limits_{n=1}^\infty L_n
		\end{equation}
		is self-adjoint, restricts $L$, and has domain
		\begin{equation}
		 D(L')=\{u\in L^2(\Omega_1)~|~\Delta u\in L^2(\Omega_1),u\rstr_{\Gi}=0=(u-b\pd_\nu u)\rstr_{\Gamma_1}=\beta_{\thet}[u]\}.
		\end{equation}
	\end{prop}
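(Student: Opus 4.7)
The plan is to follow the Glazman-decomposition strategy of \cite{marlrozen2009} and check that the radial bounded potential $q$ enters only as a subordinate perturbation, leaving the Weyl limit-point/limit-circle dichotomy at $r = 0$ intact. In polar coordinates $(r,\theta)$ centred at $0$, $\Omega_1 = \{(r,\theta) : 0 < r < 1,~-\pi/2 < \theta < \pi/2\}$, and because the geometry, the Berry--Dennis \bc and $q = q(r)$ all separate, a product $u = R(r)\Theta(\theta)$ satisfies the PDE and the \bc on $\Gamma_1$ exactly when $\Theta$ satisfies an angular eigenproblem and $R$ a radial Sturm--Liouville problem on $(0,1)$. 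Computing $\partial_\nu$ on $\theta = \pm\pi/2$ (where $y = \pm r$ and $\partial_\nu = \pm r^{-1}\partial_\theta$, the two signs combining) reduces the Berry--Dennis \bc to a Robin-type angular condition. The resulting angular operator $A := -d^2/d\theta^2$ is self-adjoint on $L^2(-\pi/2,\pi/2)$, and a direct calculation yields the spectrum $\sigma(A) = \{-1/b^2\} \cup \{n^2 : n \geq 1\}$ with eigenfunction $\Theta_0(\theta) = e^{-\theta/b}$ for $\mu_0 = -1/b^2$ and real sine/cosine eigenfunctions at $\mu_n = n^2$ for $n \geq 1$, together forming an orthogonal basis $\{\Theta_n\}_{n \geq 0}$.

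Under the resulting unitary $L^2(\Omega_1) \cong \bigoplus_n L^2((0,1);\, r\,\dee r)$, the operator $L$ reduces to the direct sum of the radial operators
\begin{equation*}
L_n R = -\frac{1}{r}(rR')' + \Big(q(r) + \frac{\mu_n}{r^2}\Big)R,
\end{equation*}
each with the regular Dirichlet endpoint $R(1) = 0$. For $n \geq 1$, Frobenius analysis of the unperturbed problem yields two solutions behaving like $r^{\pm n}$ near $0$, and $r^{-n}$ is not in $L^2(r\,\dee r)$ near $0$; since $q \in L^\infty$ is subordinate to $\mu_n/r^2$ there, this limit-point classification persists, so each such $L_n$ is self-adjoint on its maximal domain. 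For $n = 0$ the attractive $-1/(b^2 r^2)$ term gives Frobenius exponents $\pm i/b$; both solutions oscillate like $e^{\pm i \log(r)/b}$ near $0$ and lie in $L^2(r\,\dee r)$, so $L_0$ is limit-circle and its minimal operator has deficiency indices $(1,1)$. The one-parameter family of self-adjoint extensions $L_0'$ is parameterised by the Lagrange bracket at $0$ against the fundamental pair $(u_0,v_0)$ of \eqref{eqnu0v0}, whose common angular factor $e^{-\theta/b}$ cancels in the bracket, yielding precisely the \bc $\beta_\theta[u] = 0$ of \eqref{eqnbetabc}.

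Finally, $L' := L_0' \oplus \bigoplus_{n \geq 1} L_n$ is self-adjoint as a direct sum of self-adjoint operators; unfolding the separation-of-variables unitary gives the claimed domain description and exhibits $L'$ as a restriction of $L$ obtained by imposing one extra scalar \bc in the single limit-circle fibre. The main obstacle I anticipate is verifying the perturbation stability of the limit-point/limit-circle classification under $q \in L^\infty$: this is routine within Sturm--Liouville theory but requires a careful Duhamel / variation-of-parameters argument on a small neighbourhood of $0$ to confirm that the Frobenius asymptotics are preserved.
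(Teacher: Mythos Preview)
Your separation-of-variables strategy, angular spectrum, and radial Sturm--Liouville decomposition are exactly what the paper does (following \cite{marlrozen2009}). The one substantive difference is in how the potential $q$ is handled. You propose to re-derive the Weyl classification at $r=0$ for each $L_n$ with $q$ present, anticipating a Duhamel/variation-of-parameters argument to show the Frobenius asymptotics survive. The paper instead dispatches this in one line: the result for $q=0$ is \cite{marlrozen2009}, and since multiplication by $q\in L^\infty(0,1)$ is a bounded self-adjoint operator on both $L^2(\Omega_1)$ and $L^2(0,1;r\,\dee r)$, the Kato--Rellich-type perturbation theorem \cite[Thm.~V.4.3]{kato1995} transfers self-adjointness (with the same domains) directly to the $q\neq0$ operators. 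This also preserves the limit-point/limit-circle classification, since bounded symmetric perturbations leave deficiency indices of the minimal operator unchanged. So the obstacle you flag is real but has a much cheaper resolution than the asymptotic analysis you sketch.
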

	
	This is proved for $q=0$ in \cite{marlrozen2009}, and since multiplication by $q\in L^\infty(0,1)$ is a bounded self-adjoint operator on both $L^2(\Omega_1)$ and $L^2(0,1;r\dee r)$ it holds for $q\neq0$ \emph{via} \cite[Thm. V.4.3]{kato1995}. For most of the rest of this section we explain \eqref{eqnLprimedecomp}, since the underlying tools will be useful.
	
	In polar coordinates the
 eigenvalue problem for $L-\lambda$ is
	\begin{equation}
	\label{eqnevprobL}
		\left\{\begin{array}{ccccc}
			-\displaystyle{\left(\frac{1}{r}\pd_r\left(r\pd_r\right)-\frac{1}{r^2}\pd_\thet^2\right)u(\cdot~;\lambda)} +q(|\cdot|)u(\cdot~;\lambda)
				&=&\lambda u(\cdot~;\lambda)
					&\rm{in}&\Omega_1, \\
			u(\cdot~;\lambda)
				&=&0
					&\rm{on}&\Gi, \\
			\big({b} \displaystyle{\pd_\thet}u(\cdot~;\lambda)+u(\cdot~;\lambda)\big)\rstr_{\thet=\pm\frac{\pi}{2}}
				&=&0
					&\rm{on}&\Gamma_1.
		\end{array}\right.
	\end{equation}
	Separating the variables we find two ordinary differential problems. Firstly the angular problem
	\begin{equation}
	\left\{\begin{array}{cccc}
		-\Theta''(\thet)
			&=&\mu \Theta(\thet)
				&\big(\thet\in(-\frac{\pi}{2},\frac{\pi}{2})\big), \\
		{b}\Theta'(\thet)+\Theta(\thet)
			&=&0
				&(\thet=\pm\frac{\pi}{2}),
	\end{array}\right.\nonumber
	\end{equation}
	is easily calculated to possess eigenvalues and eigenfunctions
	\begin{align}
		\mu_0&=-\frac{1}{{b}^2},
%		\\
		\qquad\mu_n
%		&
		=n^2\qquad(n\geq1); \nonumber\\
		\Theta_n(\thet)&=\left\{\begin{array}{cc}
		{\rm e}^{-\thet/{b}}							&(n=0),\\
		\cos(n\thet)-(n{b})^{-1}\sin(n\thet)	&(n ~\rm{ even}), \\
		\cos(n\thet)+n{b} \sin(n\thet)		&(n ~\rm{ odd}).
		\end{array}\right.\nonumber
	\end{align}
	Replacing the separation-of-variables parameter $\mu$ by $\mu_n$ then yields the ordinary differential system
	\begin{equation}
	\label{eqnradialODE}
		\left\{\begin{array}{cccc}
			-\displaystyle{\frac{1}{r}}
				\big(r
			R_n'(r;\lambda)
			\big)'
			+q(r)R_n(r;\lambda)+\displaystyle{\frac{\mu_n}{r^2}}R_n(r;\lambda)
				&=&\lambda R_n(r;\lambda)
					&\big(r\in(0,1)\big),\\
			R_n(1;\lambda)
				&=&0.
					&
		\end{array}\right.
	\end{equation}
	Since $R(r)\Theta(\thet)\in L^2(\Omega_1;r\dee r\dee\thet)$ if and only if $R\in L^2(0,1;r\dee r)$ and $\Theta\in L^2(-\frac{\pi}{2},\frac{\pi}{2};\dee\thet)$, we see that the natural Hilbert space over which to consider the radial differential system is the weighted space $L^2(0,1;r\dee r)$. The solutions $R_n$ satisfy the following result.
	\begin{prop}[Marletta--Rozenblum, 2009, p. 4]
	\label{propRn}
		Let $\lambda\in\mathbb{C}$ and $q=0$. For each $n\geq1$ a solution $R_n(\cdot~;\lambda)$ of the differential equation in \textnormal{\eqref{eqnradialODE}} is in $L^2(0,1;r\dee r)$ if and only if it is a constant multiple of the Bessel function
		\begin{align}
			J_n(\sqrt\lambda r)\qquad\big(r\in(0,1)\big).\nonumber
		\end{align}
		The function $R_0(\cdot~;\lambda)$ solves the full \bvp \textnormal{\eqref{eqnradialODE}} in $L^2(0,1;r\dee r)$ with $n=0$ if and only if it is a constant multiple of
		\begin{align}
			Y_{i/{b}}(\sqrt\lambda)J_{i/{b}}(\sqrt\lambda r)-
				J_{i/{b}}(\sqrt\lambda)Y_{i/{b}}(\sqrt\lambda r)\qquad\big(r\in(0,1)\big).\label{eqnBesselcombination}
		\end{align}
	\end{prop}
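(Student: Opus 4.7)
The plan is to recognise the radial ODE in \eqref{eqnradialODE} with $q=0$ as a Bessel equation after rescaling, and then classify its $L^2(0,1;\dee r)$ solutions by examining the near-origin asymptotics of Bessel functions. After multiplying through by $r^2$, the differential equation reads
\begin{equation*}
r^2R_n''+rR_n'+(\lambda r^2-\mu_n)R_n=0,
\end{equation*}
which, under $x=\sqrt\lambda r$, is Bessel's equation of order $\sqrt{\mu_n}$. Its general solution is thus an arbitrary linear combination of $J_{\sqrt{\mu_n}}(\sqrt\lambda r)$ and $Y_{\sqrt{\mu_n}}(\sqrt\lambda r)$. For $n\geq1$ the order is the positive integer $n$, while for $n=0$ the order is the pure imaginary $i/{b}$.

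For $n\geq1$ I would invoke the standard origin asymptotics $J_n(\sqrt\lambda r)=O(r^n)$ and $Y_n(\sqrt\lambda r)=O(r^{-n})$ as $r\to0^+$, so that against the weight $r\dee r$ only $J_n$ is square-integrable near $0$; the $Y_n$ contribution forces any mixed solution out of $L^2(0,1;r\dee r)$. Since $J_n(\sqrt\lambda\cdot)$ is smooth at $r=1$, this characterises the entire $L^2(0,1;r\dee r)$ solution space of the ODE as $\mathbb{C}\cdot J_n(\sqrt\lambda r)$, consistent with $L_n$ being limit-point at $0$ as stated in Proposition~\ref{propmarlrozenmain}.

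For $n=0$ the order is $i/{b}$, and the leading near-origin behaviour of both solutions involves $(r/2)^{\pm i/{b}}=\exp\big(\pm i\log(r/2)/{b}\big)$, a bounded oscillation in $\log r$. Hence \emph{both} $J_{i/{b}}(\sqrt\lambda r)$ and $Y_{i/{b}}(\sqrt\lambda r)$ are square-integrable against $r\dee r$ near $0$, and both are smooth away from $0$; the two-dimensional solution space of the ODE therefore lies entirely in $L^2(0,1;r\dee r)$ (this is the limit-circle alternative). The Dirichlet condition $R_0(1;\lambda)=0$ then cuts this space down to the one-dimensional subspace spanned by
\begin{equation*}
Y_{i/{b}}(\sqrt\lambda)J_{i/{b}}(\sqrt\lambda r)-J_{i/{b}}(\sqrt\lambda)Y_{i/{b}}(\sqrt\lambda r),
\end{equation*}
which vanishes by construction at $r=1$; this is exactly the claimed combination \eqref{eqnBesselcombination}.

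The main obstacle is the $n=0$ case, where Bessel functions of purely imaginary order are non-standard. To justify the near-origin expansions rigorously I would appeal to the analytic continuation $Y_\nu=(J_\nu\cos(\nu\pi)-J_{-\nu})/\sin(\nu\pi)$ at $\nu=i/{b}$ together with the power-series definition of $J_\nu$, showing that both leading singular terms $(r/2)^{\pm i/{b}}/\Gamma(1\pm i/{b})$ are of unit modulus up to bounded factors. Once this is in hand, square-integrability against $r\dee r$ near $0$ is immediate and the proof reduces to a standard endpoint-selection argument for Sturm--Liouville equations.
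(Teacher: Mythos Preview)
Your argument is correct and is precisely the standard one: identify the radial equation as Bessel's equation of order $\sqrt{\mu_n}$, then use the small-argument asymptotics $J_n\sim r^n$, $Y_n\sim r^{-n}$ (so only $J_n\in L^2(0,1;r\,\dee r)$ for $n\geq1$) and the boundedness of $J_{i/b},Y_{i/b}$ near $0$ (so both are in $L^2$, and the Dirichlet condition at $r=1$ selects \eqref{eqnBesselcombination}). Note that the paper itself does not supply a proof of this proposition---it is quoted directly from \cite{marlrozen2009}---so there is no ``paper's own proof'' to compare against; your write-up is exactly what one would expect and matches the reasoning implicit in that reference. One trivial slip: in your opening line you wrote $L^2(0,1;\dee r)$ where you meant $L^2(0,1;r\,\dee r)$.
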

	\begin{remark}
		Even if $q\neq0$, for each $n\geq1$ and $\lambda\in\mathbb{C}$ the solution $R_n(r;\lambda)$ is asymptotically equal to a constant multiple of $J_n(\sqrt\lambda r)$ as $r\rightarrow0$. The same result fails to hold for $R_0$ and \textnormal{\eqref{eqnBesselcombination}}, as both have zeros accumulating at $0$ which are not, in general, the same sequence.
	\end{remark}

	We may now define explicitly the operators $L_n$ in Proposition \ref{propmarlrozenmain}. For $n\geq0$
	\begin{align}
			 D(L_n)
			&:=
			\biggl\{\ph\in L^2(0,1;r\dee r)~\left|~\frac{1}{r}\big(r\ph'(r)\big)'\right.-\frac{\mu_n}{r^2}\ph(r)\in L^2(0,1;r\dee r),
			\ph(1)=0\biggr\},\nonumber\\
			 L_n\ph(r)&:=-\frac{1}{r}\big(r\ph'(r)\big)'+q(r)\ph(r)+\frac{\mu_n}{r^2}\ph(r)\qquad\big(r\in(0,1)\big).
	\end{align}
	By the orthogonality in $L^2(-\frac{\pi}{2},\frac{\pi}{2})$ of $\Theta_n$, the operator $L$ is equal to the orthogonal sum $\oplus_{n=0}^\infty L_n$. As remarked in \cite[pp. 4--5]{marlrozen2009}, for every $n\geq1$ the operator $L_n$ is of limit-point type at $0$, whilst $L_0$ is of limit-circle

	type at $0$. This means that a self-adjoint restriction of $L_0$ may be found by imposing a \bc at $0$. Assuming $0$ is not in the spectrum of $L_0$, this \bc may be written \cite{niessenzettl1992} as a Lagrange bracket with a linear combination of
	\begin{align}
		u_0(r)=\sin\left(\frac{\log(r)}{{b}}\right)\qquad{\rm{and}}\qquad v_0(r)=\cos\left(\frac{\log(r)}{{b}}\right),\nonumber
	\end{align}
	which are linearly independent in the kernel of $L_0-q$; see \eqref{eqnu0v0}. For technical reasons we always need $u_0$, so we use the combination $u_0+\beta v_0$ for a fixed $\beta\in\mathbb{R}$. Applied to a function $\ph$, the Lagrange bracket \bc requires that as $r\rightarrow0$ we have
	\begin{equation}
		\label{eqnbetabc1}
		[\ph,u_0+\beta v_0](r):=\ph(r)r(u_0+\beta v_0)'(r)-r\ph'(r)(u_0+\beta v_0)(r)\rightarrow0.
	\end{equation}
	Abusing notation we define $\beta[\ph]:=[\ph,u_0+\beta v_0](0^+)$. Thus, our self-adjoint restriction of $L_0$ is
	\begin{align}
		 D(L_0')
		&=
		\biggl\{\ph\in L^2(0,1;r\dee r)~\left|~\frac{1}{r}\left(r\ph'(r)\right)'\right.-\frac{\mu_n}{r^2}\ph(r)\in L^2(0,1;r\dee r),
		\beta{[\ph]}=0=\ph(1)\biggr\},\nonumber\\
		 L_0'\ph(r)
		&=
		-\frac{1}{r}\big(r\ph'(r)\big)'+q(r)\ph(r)+\frac{\mu_n}{r^2}\ph(r)~~~\big(r\in(0,1)\big),\nonumber
	\end{align}
	meaning that the orthogonal sum
	\begin{equation}
		L':=L_0'\oplus\bigoplus\limits_{n=1}^\infty L_n
	\end{equation}
	is a self-adjoint restriction of $L$.

	Applying the asymptotics of Proposition \ref{propRn} we see that the \bc \eqref{eqnbetabc1} when applied to the $\Theta_0$-component of the solution $u$ of \eqref{eqnevprobL} is equivalent to the two-dimensional
	\begin{equation*}
		[u,u_0+\beta v_0](0^+,\thet)=0~~~\big(\thet\in(-\frac{\pi}{2},\frac{\pi}{2})\big).
	\end{equation*}
	With the same abuse of notation we are now justified in defining
	\begin{equation*}
		\beta_\thet[u]=[u,u_0+\beta v_0](0^+,\thet),
	\end{equation*}
	which yields the \bc in \eqref{eqn2DLagrangebracket} and proves that the self-adjoint operator $L'$ has domain
	\begin{equation*}
		D(L')=\{u\in L^2(\Omega_1)~|~\Delta u\in L^2(\Omega_1),u\rstr_{\Gi}=0=(u+{b} y\pd_\nu u)\rstr_{\Gamma_1}=\beta_\thet[u]\}.
	\end{equation*}

	We observe the following corollary to \cite[Sec. 4]{marlrozen2009}.
	\begin{prop}
		\label{propLprimespectrum}
		The operator $L'$ has discrete spectrum accumulating exactly at $\pm\infty$.
	\end{prop}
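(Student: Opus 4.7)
The plan is to exploit the orthogonal decomposition $L'=L_0'\oplus\bigoplus_{n\geq1}L_n$ from Proposition \ref{propmarlrozenmain}. Since the spectrum of a direct sum of self-adjoint operators is the closure of the union of the component spectra, it is enough to verify three facts: (a) each summand has purely discrete spectrum, (b) the combined spectrum has no finite accumulation point, and (c) it accumulates both at $+\infty$ and at $-\infty$. Only (c) relies on the novel limit-circle case $L_0'$; the other ingredients are essentially classical.

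For each $n\geq1$ I would treat $L_n$ as a Sturm--Liouville operator on $L^2(0,1;r\,\dee r)$ with Dirichlet condition at the regular right endpoint $r=1$ and limit-point behaviour at $r=0$ (driven by the coefficient $\mu_n/r^2=n^2/r^2$). Standard theory (via a Hilbert--Schmidt Green's kernel, or a compact embedding of the form domain) then gives compact resolvent, hence purely discrete spectrum bounded below and accumulating only at $+\infty$. A min--max argument using the pointwise bound $n^2/r^2\geq n^2$ on $(0,1)$ and boundedness of $q$ yields a quantitative lower bound $\inf\sigma(L_n)\geq n^2-\|q\|_\infty$, ensuring that for any compact interval $[a,b]\subset\mathbb{R}$ only finitely many indices $n$ can contribute eigenvalues into $[a,b]$, and each contributes only finitely many. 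This disposes of (a) for $n\geq1$, simultaneously handles (b) for the tail $n\geq1$, and delivers half of (c): accumulation at $+\infty$ of the combined spectrum.

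For $L_0'$ I would invoke \cite[Sec.~4]{marlrozen2009} directly. The key points are that $L_0'$ is limit-circle at $0$ with the Lagrange-bracket \bc fixed by $\beta$ and regular at $1$, so its resolvent is again realised by a Hilbert--Schmidt Green's kernel and is therefore compact, giving discrete spectrum. The novel feature, absent for $n\geq1$, is that the separation eigenvalue $\mu_0=-1/{b}^2<0$ combined with the oscillatory model solutions $u_0,v_0$ at the origin forces $L_0'$ to be unbounded \emph{both} above and below, with eigenvalues accumulating at both $\pm\infty$. Combining this with the information for $n\geq1$ yields (c) completely, and (b) follows because adding the finitely many eigenvalues of $L_0'$ lying in any compact $[a,b]$ does not alter the no-finite-accumulation property established for the tail. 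The main obstacle is precisely the $-\infty$-accumulation for $L_0'$: discreteness alone is routine, but showing actual accumulation at $-\infty$ (rather than mere unboundedness below) genuinely requires the limit-circle analysis of \cite{marlrozen2009}, and is the point at which Theorem \ref{thmnegevasympsgen} later refines this qualitative statement into a sharp asymptotic.
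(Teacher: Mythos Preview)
Your argument is correct, but the paper takes a different and more economical route. Rather than working through the decomposition $L'=L_0'\oplus\bigoplus_{n\geq1}L_n$ component by component, the paper treats $L'$ globally as a bounded perturbation of the $q=0$ operator. Specifically, \cite[Sec.~4]{marlrozen2009} already establishes that $L'-q$ has simple eigenvalues accumulating at both $\pm\infty$; the spectral theorem then gives compact resolvent for $L'-q$, and Kato's perturbation results \cite[Thms.~IV.3.17 \& V.4.3]{kato1995} transfer compact resolvent and self-adjointness to $L'$. Accumulation at both $\pm\infty$ is then obtained from the spectral stability bound \cite[Thm.~V.4.10]{kato1995}, which pins each eigenvalue of $L'$ within $\|q\|_{L^\infty}$ of the spectrum of $L'-q$.

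Your approach has the virtue of being self-contained and yielding the explicit lower bound $\inf\sigma(L_n)\geq n^2-\|q\|_\infty$, which is pleasant and clarifies exactly why the tail contributes no finite accumulation. The paper's approach, by contrast, avoids re-deriving any Sturm--Liouville facts and simply cites the $q=0$ case wholesale, at the cost of being less transparent about \emph{which} summand is responsible for the $-\infty$ accumulation. One small point to tighten in your write-up: when you invoke \cite[Sec.~4]{marlrozen2009} for $L_0'$, that reference is strictly for $q=0$, so you should either note that the limit-circle oscillation argument near $r=0$ is insensitive to the bounded perturbation $q$, or else apply the same Kato perturbation step to $L_0'$ alone.
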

	\begin{proof}

		It is known \cite[Sec. 4]{marlrozen2009} that $\sigma(L'-q)$ comprises simple eigenvalues accumulating at $\pm\infty$. Now observe that as a simple consequence of the spectral theorem \cite[Sec. VIII.3]{reedsimon1980} $L'-q$ has compact resolvent. Then \cite[Thms. IV.3.17 \& V.4.3]{kato1995} imply that $L'$ also has compact resolvent and is self-adjoint. The last two results combined mean that the spectrum of $L'$ accumulates nowhere except \emph{possibly} at $\pm\infty$. Combining this fact with \cite[Thm. V.4.10]{kato1995}, which here implies

		\begin{equation}
		\label{eqnspectralperturbation}
		\sup\limits_{\lambda\in\sigma(L')}~{\rm\textnormal{dist}}\big(\lambda,\sigma(L'-q)\big)\leq\|q\|_{L^\infty(\Omega_1)},
		\end{equation}
		we obtain sequences of eigenvalues of $L'$ that accumulate at \emph{both} $\pm\infty$.

	\end{proof}
	\begin{remark}
		One cannot guarantee solely from this argument that no new eigenvalues are introduced by adding $q$, which otherwise would alleviate the need for Theorem \ref{thmnegevasympsgen}.
	\end{remark}
	
	By the reasoning in \cite[Sec. 5]{marlrozen2009} one may then define the self-adjoint operator
	\begin{align}
	\label{eqnopfulldom}
		D(T)&:=\{u\in L^2(\Omega)~|~\Delta u\in L^2(\Omega),
		(u-f\pd_\nu u)\rstr_{\Gc}=u\rstr_\Gamma=\beta_\thet[u]=0\},\nonumber\\
		Tu &:= (-\Delta+q)u,
	\end{align}
	also possessing purely discrete spectrum accumulating only at $\pm\infty$, and which is a restriction of the operator underlying \ref{eqnSchrodinger}.
	
	We finish the section with an important fact:

	\begin{reshyp}
		Without a loss of generality, when considering the question of uniqueness outlined in Section \ref{secprobdef} we may assume that both of the operators $T_1$ and $T_2$---corresponding respectively to the triples $(q_1,f_1,\beta_1)$ and $(q_2,f_2,\beta_2)$---have $0$ in their resolvent set. To see why, suppose either (or both) has $0$ in their spectrum. Then we may simply shift their spectra, which are discrete, by adding the same sufficiently small constant to both $q_1$ and $q_2$ so that the resulting operators now both have $0$ in their resolvent set. The question of uniqueness is left unchanged by the shift. We will assume the hypothesis holds for the rest of the paper.
	\end{reshyp}

\section{Complex geometric optics for an existing approach with a Dirichlet \bc}
\label{secexistingmethods}

	To prove Theorem \ref{thmquniqsingbdry} we will adapt some existing methods. To reduce technicality we will use \cite[Thm. 1.1]{imanuhlyama2010} instead of the more powerful results in \cite{guilltzou2011,imanuhlyama2015}. As previously mentioned, all such results employ a Dirichlet \bc on the inaccessible $\Gc$.

	The main ingredients in the proof of \cite[Thm. 1.1]{imanuhlyama2010}
	are Lemma \ref{lemIUYCGO} of this section (merely summarised from \cite{imanuhlyama2010}) and \cite[Prop. 4.1]{imanuhlyama2010}. We will explain the former now. Let any $\vx=(x_1,x_2)\in\mathbb{R}^2$ be represented by the complex number $z=x_1+ix_2$. Notating the partial derivative with respect to $x_j$ as $\pd_j$, define the complex derivatives $\pd_z=(\pd_{1}-i\pd_{2})/2$ and $\pd_{\overline z}=(\pd_{1}+i\pd_{2})/2$. Note that $\Phi(z)~(z\in\Omega)$ is holomorphic if and only if the Cauchy--Riemann equation $\pd_{\overline z}\Phi(z)=0~(z\in\Omega)$ is satisfied.
	\begin{definition}
	\label{defphasefunction}
		We say that a holomorphic $\Phi=\ph+i\psi$ on $\Omega$, with $\ph$ and $\psi$ real-valued and possessing a continuous extension to $\overline\Omega$, is an \emph{admissible phase function} if the following criteria are met:
		\begin{enumerate}[label=(\roman*)]
			\item its set of critical points $\mathcal{H}:=\{z\in\overline\Omega~|~\pd_z\Phi(z)=0\}$ does not intersect $\overline\Gamma$;
			\item its critical points are non-degenerate, i.e., $\pd_z^2\Phi(z)\neq0~(z\in\mathcal{H})$;
			\item its imaginary part $\psi$ vanishes on $\Gc$.
		\end{enumerate}
	\end{definition}
	\begin{remark}
		The function $\Phi$ will be crucial in constructing the complex geometric optics (\textsc{cgo}) solutions. The critical points $\mathcal{H}$ are finite, by holomorphicity of $\Phi$.
	\end{remark}

	Now define the primitive operators $\pd_{\overline{z}}^{-1}$ and $\pd_z^{-1}$ by
	\begin{align*}
		\pd_{\overline z}^{-1}g(z)	:=-\frac{1}{\pi}\int_\Omega
										\frac{g(\xi_1+i\xi_2)}{\xi_1+i\xi_2-z}\dee\xi_2\dee\xi_1
									=:\overline{\pd_z^{-1}\overline{g(z)}}.
	\end{align*}
	\begin{lemma}[Imanuvilov--Uhlmann--Yamamoto, 2010, Sec. 3]
	\label{lemIUYCGO}
		Let $\alpha>0$ and $q_1\in C^{2+\alpha}(\overline\Omega)$, and take any admissible phase function $\Phi$. Then, for each $\tau>0$, there is a solution to
		\begin{align}
			\left\{\begin{array}{rcccc}
				(-\Delta+q_1)u	&=&0&\rm{in}&\Omega, \\
				u				&=&0&\rm{on}&\Gc
			\end{array}\right.\nonumber
		\end{align}
		given by
		\begin{align}
		\label{eqnv1}
		 v_1(z;\tau)={\rm e}^{\tau\Phi(z)}\left(a(z)+a_0(z)/\tau\right)+{\rm e}^{\tau\overline{\Phi(z)}}\left(\overline{a(z)}+\overline{a_1(z)}/\tau\right)
%		\nonumber\\
								+{\rm e}^{\tau(\Phi+\overline\Phi)(z)/2}\tilde v_1(x;\tau),
		\end{align}
		where the following conditions hold.
		\begin{enumerate}[label=(\roman*)]
			\item\label{hypafn} The \emph{amplitude function} $a(\cdot)\in C^2(\overline\Omega)$ is non-trivial, holomorphic on $\Omega$, its real part vanishes on $\Gc$ and $a=\pd_z a=0$ in $\mathcal H\cap\pd\Omega$; such an $a(\cdot)$ is called \emph{admissible}.
			\item\label{hypv1tild} The remainder $\|\tilde v_1(\cdot~;\tau)\|_{L^2(\Omega)}=o(1/\tau)$ as $\tau\rightarrow+\infty$.
			\item\label{hypa0a1} The functions $a_0$ and $a_1$ are holomorphic and satisfy the $\tau$-independent \textsc{bc}
			\begin{equation}
				\label{eqnhypa0a1}
					(a_0+a_1)\rstr_{\Gc}=\frac{\tilde M_1}{4\pd_z\Phi}+
													\frac{\tilde M_3}{4\overline{\pd_z\Phi}}.
			\end{equation}
			The functions $\tilde M_1:=\pd_{\overline z}^{-1}(aq_1)-M_1$ and $\tilde M_3(z):=\pd_{z}^{-1}(\overline{a(z)}q_1(z))-M_3(\overline z)$, where $M_1$ and $M_3$ are any polynomials satisfying, for $j=0,1$ and $2$,
			\begin{align}
			\label{eqnhypM1}
				\pd_z^j(\pd_{\overline z}^{-1}(a(z)q_1(z))-M_1(z))	&=0, \\
			\label{eqnhypM3}
				\pd_{\overline z}^j(\pd_z^{-1}(\overline{a(z)}q_1(z))-M_3(\overline z))	&=0.
			\end{align}
		\end{enumerate}
		Moreover for any $q_2\in C^{2+\alpha}(\overline{\Omega})$ and the same $\Phi$ and $\tau$ we can find the same amplitude function $a(\cdot)$ so that
		\begin{align}
		\label{eqnv2}
			 v_2(x;\tau)={\rm e}^{-\tau\Phi(z)}(a(z)+b_0(z)/\tau)+{\rm e}^{-\tau\overline{\Phi(z)}}\left(\overline{a(z)}
						+\overline{b_1(z)}/\tau\right)
%						\nonumber\\
						+{\rm e}^{-\tau(\Phi+\overline\Phi)(z)/2}\tilde v_2(x;\tau)
		\end{align}
		solves $(-\Delta+q_2)u=0,u\rstr_{\Gc}=0$, with
		\begin{enumerate}[label=(\roman*),resume]
			\item\label{hypv2tild} $\|\tilde v_2(\cdot~;\tau)\|_{L^2(\Omega)}=o(1/\tau)$ as $\tau\rightarrow+\infty$, and
			\item\label{hypb0b1} holomorphic $b_0$ and $b_1$ satisfying
			\begin{equation}
			\label{eqnhypb0b1}
				(b_0+b_1)\rstr_{\Gc}=-\frac{\tilde M_2}{4\pd_z\Phi}
												-\frac{\tilde M_4}{4\overline{\pd_z\Phi}}.
			\end{equation}
			Here $\tilde M_2:=\pd_{\overline z}^{-1}(aq_2)-M_2$ and $\tilde M_4(z):=\pd_{z}^{-1}(\overline{a(z)}q_2(z))-M_4(\overline z)$, where $M_2$ and $M_4$ are any polynomials satisfying, for $j=0,1$ and $2$,
			\begin{align}
			\label{eqnhypM2}
				\pd_z^j(\pd_{\overline z}^{-1}(a(z)q_2(x))-M_2(z))	&=0, \\
			\label{eqnhypM4}
				\pd_{\overline z}^j(\pd_z^{-1}(\overline a(z)q_2(x))-M_4(\overline z))	&=0.
			\end{align}
		\end{enumerate}
		\end{lemma}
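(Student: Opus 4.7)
The plan is to verify the ansatz for $v_1$ by substitution into $(-\Delta+q_1)u=0$ and then solve the residual equation for $\tilde v_1$ by a Carleman-type estimate. First, writing $\Delta=4\pd_z\pd_{\overline z}$ and exploiting that, since $\Phi$ is holomorphic, $\pd_{\overline z}{\rm e}^{\tau\Phi}=0$ and $\pd_z{\rm e}^{\tau\overline\Phi}=0$, provided $a, a_0, a_1$ are holomorphic one obtains
\begin{align*}
(-\Delta+q_1)\bigl[{\rm e}^{\tau\Phi}(a+a_0/\tau)+{\rm e}^{\tau\overline\Phi}(\overline a+\overline{a_1}/\tau)\bigr]={\rm e}^{\tau\Phi}q_1(a+a_0/\tau)+{\rm e}^{\tau\overline\Phi}q_1(\overline a+\overline{a_1}/\tau).
\end{align*}
Hence $\tilde v_1$ must solve an inhomogeneous Schr\"odinger equation with the negative of this as right-hand side, subject to the boundary trace induced by $v_1\rstr_{\Gc}=0$; since $\psi=0$ on $\Gc$ the exponentials all reduce to ${\rm e}^{\tau\ph}$ there, and admissibility $\RE(a)\rstr_{\Gc}=0$ cancels the leading boundary trace.

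Second, the mechanism for suppressing the leading stationary-phase contribution is to invert $\pd_{\overline z}$ formally against the source ${\rm e}^{\tau\Phi}q_1 a$: a corrector proportional to ${\rm e}^{\tau\Phi}\pd_{\overline z}^{-1}(aq_1)/(\tau\pd_z\Phi)$ is natural, but $\pd_z\Phi$ vanishes on the critical set $\mathcal H$. The polynomials $M_1,M_3$ (and analogously $M_2,M_4$ for $v_2$) are introduced to cancel the leading Taylor coefficients of $\pd_{\overline z}^{-1}(aq_1)$ and $\pd_z^{-1}(\overline a q_1)$ at the critical points in $\mathcal H\cap\pd\Omega$, so that the corrector remains bounded near $\mathcal H$; the boundary specification \eqref{eqnhypa0a1} for $(a_0+a_1)\rstr_{\Gc}$ is then dictated by requiring that the resulting Dirichlet trace of $v_1$ be $O(1/\tau)$ and absorbable into $\tilde v_1$.

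Third, the principal step---and where I expect the main technical obstacle---is proving $\|\tilde v_1(\cdot~;\tau)\|_{L^2(\Omega)}=o(1/\tau)$. The key tool is a two-dimensional Carleman-type estimate for the conjugated operator ${\rm e}^{-\tau\ph}(-\Delta+q_1){\rm e}^{\tau\ph}$ of Bukhgeim--Imanuvilov type with simple critical phases. Non-degeneracy $\pd_z^2\Phi\neq0$ on $\mathcal H$ together with the vanishing $a=\pd_z a=0$ on $\mathcal H\cap\pd\Omega$ (in the admissibility of $a$) are precisely what allow the stationary-phase expansion of the right-hand side to improve from the generic $O(1/\tau)$ to the desired $o(1/\tau)$. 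The parallel construction of $v_2$ for $(-\Delta+q_2)u=0$ follows by replacing $\Phi\mapsto-\Phi$ and $q_1\mapsto q_2$, which swaps holomorphic and antiholomorphic exponentials and reproduces the argument \emph{mutatis mutandis} while reusing the same admissible amplitude $a$.
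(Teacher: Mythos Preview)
The paper does not prove this lemma at all: it is explicitly introduced as ``merely summarised from \cite{imanuhlyama2010}'' and is stated without proof, serving only as an imported ingredient for the later Proposition~\ref{propwghtdsumq}. So there is no in-paper argument against which to compare your proposal.

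That said, your sketch is a faithful outline of the Imanuvilov--Uhlmann--Yamamoto construction: the holomorphic/antiholomorphic ansatz, the reason for the polynomial corrections $M_j$ at critical points, the boundary compatibility forcing \eqref{eqnhypa0a1}, and the Carleman estimate for the remainder are all the right pieces. If you want to flesh it out, the main place where hand-waving would need to become rigorous is the passage from ``the corrector is bounded near $\mathcal H$'' to the actual $o(1/\tau)$ bound on $\tilde v_1$; this requires the specific Carleman estimate from \cite{imanuhlyama2010} together with a careful accounting of the boundary data, not just a generic stationary-phase improvement. But for the purposes of the present paper, simply citing \cite[Sec.~3]{imanuhlyama2010} is what the author does and is entirely appropriate.
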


\section{Unique continuation and density results}
\label{secuniqcont}

	To utilise the arguments of \cite{imanuhlyama2010} we will prove density, in the full space of solutions of the differential equation in the \bvp \eqref{eqnSchrodinger}, of solutions that also satisfy the \textsc{bc}s \eqref{eqnBDbc} and \eqref{eqnbetabc}. To do this we will adapt certain results of \cite{browmarlreye2016}. We note that these results are not entirely new to \cite{browmarlreye2016}, being related to early Runge-type theorems, e.g., \cite{rundellstecher1977}.
	The following lemmata achieve unique continuation and the required density. Define $\Omega^*:=\overline\Omega\setminus\{0\}$.

	\begin{lemma}[Unique continuation principles for a Schr\"odinger-type equation]
	\label{lemuniqcont}
	
		Let $\Omega$, $q$ and $f$ be admissible and $\Omega'\subset\Omega$, with $\Omega'$ non-empty, bounded, connected and open, such that $\pd\Omega'\in C^2$ and $\Omega\setminus\overline{\Omega'}$ is connected.
		\begin{enumerate}[label=(\roman{*})]
		
			\item{\label{lemuniqcontball}} If $u\in H^{2}_{\loc}(\Omega^{*})$ satisfies $(-\Delta+q)u=0$ in $\Omega$, and there is a ball $B$ with $\overline B\subset\Omega$ and $u\rstr_B=0$, then $u=0$.
			
			\item\label{lemuniqcontbdry} If $u\in H^{2}_{\loc}(\Omega^{*}\setminus\Omega')$, $(-\Delta+q)u\rstr_{\Omega\setminus\overline{\Omega'}}=0$ and $u\rstr_\Gamma=0=\pd_\nu u\rstr_\Gamma$, then $u\rstr_{\Omega\setminus\Omega'}=0$.
			
		\end{enumerate}
		
	\end{lemma}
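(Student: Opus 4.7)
The plan is to reduce both parts to the standard weak unique continuation principle (UCP) for second-order elliptic operators with bounded coefficients, which states that if $U\subset\mathbb{R}^d$ is a connected open set, $u\in H^2_{\loc}(U)$ satisfies $(-\Delta+q)u=0$ with $q\in L^\infty(U)$, and $u$ vanishes on some non-empty open subset of $U$, then $u\equiv0$ on $U$. This is a classical result of Aronszajn type; the new features to handle are the point singularity at $0$ and the passage across the accessible boundary $\Gamma$.

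For part \ref{lemuniqcontball}, the first observation is that $\Omega\setminus\{0\}$ is connected, since $\Omega$ is a simply connected planar domain and removing a single point from a connected open subset of $\mathbb{R}^2$ never disconnects it. If the given ball $B$ contains $0$ I simply replace it by $B\setminus\{0\}$, which is still a non-empty open set on which $u$ vanishes. Since $u\in H^2_{\loc}(\Omega^*)\subset H^2_{\loc}(\Omega\setminus\{0\})$ and $(-\Delta+q)u=0$ on the connected open set $\Omega\setminus\{0\}$, the UCP immediately gives $u\equiv0$ there, and hence $u=0$ a.e.\ on $\Omega$.

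For part \ref{lemuniqcontbdry} I would extend $u$ by zero across $\Gamma$. Pick a bounded open $\widetilde\Omega\supset\overline\Omega$ such that $\widetilde\Omega\setminus\overline\Omega$ contains a collar neighbourhood of $\Gamma$, and extend $q$ by zero. Define
\begin{equation*}
	\tilde u=\begin{cases}u&\text{in }\Omega\setminus\overline{\Omega'},\\ 0&\text{in }\widetilde\Omega\setminus\overline\Omega.\end{cases}
\end{equation*}
The vanishing Cauchy data $u\rstr_\Gamma=\pd_\nu u\rstr_\Gamma=0$, combined with the assumed $H^2_{\loc}$ regularity of $u$ up to $\Gamma$, should ensure that $\tilde u\in H^2_{\loc}\bigl((\widetilde\Omega\setminus\overline{\Omega'})\setminus\{0\}\bigr)$ and solves $(-\Delta+q)\tilde u=0$ weakly on this set. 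The punctured extended region $(\widetilde\Omega\setminus\overline{\Omega'})\setminus\{0\}$ is still connected---by the hypothesis that $\Omega\setminus\overline{\Omega'}$ is connected, by relative openness of $\Gamma$ in $\pd\Omega$, and by the 2D point-removal observation above---and $\tilde u$ vanishes on the non-empty open set $\widetilde\Omega\setminus\overline\Omega$. Applying the UCP yields $\tilde u\equiv0$, whence $u=0$ a.e.\ on $\Omega\setminus\Omega'$.

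I expect the main technical obstacle to be the rigorous justification of the zero-extension step in part \ref{lemuniqcontbdry}: one must verify, by a careful distributional integration-by-parts on either side of $\Gamma$, that the vanishing of both the trace and the normal-trace of $u$ on $\Gamma$ is precisely what is needed for the piecewise-defined $\tilde u$ to qualify as a genuine weak $H^2_{\loc}$ solution across $\Gamma$, with no singular measure supported on $\Gamma$ appearing in the distributional Laplacian. The hypothesis $\pd\Omega'\in C^2$ is there to guarantee enough boundary regularity that this extension (and the usual elliptic $H^2$ trace theory) actually goes through. Once this is in place, both parts collapse to a single invocation of the standard UCP on an appropriate connected punctured domain, and the singularity at $0$ is handled uniformly by the two-dimensional point-removal fact.
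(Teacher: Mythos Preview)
Your proposal is correct and follows essentially the same approach as the paper: part (i) is the standard weak unique continuation principle applied on the connected punctured domain $\Omega\setminus\{0\}$, and part (ii) is the zero-extension of $u$ through $\Gamma$ followed by the same UCP. The paper's own proof is a two-line sketch that cites a reference for (i) and says ``extend $u$ by $0$ through $\Gamma$ and apply (i)'' for (ii); you have simply spelled out the details of both steps, including the two-dimensional point-removal observation and the distributional verification that the zero-extension stays in $H^2_{\loc}$.

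One minor remark: your attribution of the hypothesis $\pd\Omega'\in C^2$ to the extension step is slightly off---the extension is across $\Gamma\subset\pd\Omega$ (which is $C^\infty$ by admissibility of $\Omega$), not across $\pd\Omega'$. The $C^2$ assumption on $\pd\Omega'$ is used elsewhere, in particular in the density lemma that immediately follows.
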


	\begin{proof}
	Part \ref{lemuniqcontball} is standard, e.g., \cite[Cor. 1.1]{kurata1997}. Part \ref{lemuniqcontbdry} follows extending $u$ by $0$ through $\Gamma$ and applying \ref{lemuniqcontball}.
	\end{proof}

	\begin{lemma}
	\label{lemdensity}
		Under the hypotheses of Lemma \ref{lemuniqcont} define the sets
		\begin{align*}
			K			&:= \{
								v\in H^{2}_{\loc}(\Omega^*)
									~|~
										(-\Delta+q)v\rstr_\Omega=0
																			\}, \\
			\tilde{K}	&:=	\{
								g\in K
									~|~
										(g-f\pd_\nu g)\rstr_{\Gc}=0=\beta_\thet[g]
																			\}.
		\end{align*}
		Then $\tilde K$ is dense in $K$ under the topology induced by $\|\cdot\|_{L^2(\Omega')}$.
	
	\end{lemma}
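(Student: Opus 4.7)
The plan is to argue by Hahn-Banach duality in $L^2(\Omega')$: it suffices to show that any $\phi\in L^2(\Omega')$ with $\int_{\Omega'}\phi\bar v\,\dee x=0$ for every $v\in\tilde K$ in fact annihilates all of $K$. Given such a $\phi$, I would extend it by zero to $\tilde\phi\in L^2(\Omega)$ and, invoking the Resolvent Hypothesis, set $w:=T^{-1}\tilde\phi\in D(T)$. By construction $w$ satisfies $(-\Delta+q)w=\tilde\phi$ in $\Omega$, $w\rstr_\Gamma=0$, $(w-f\pd_\nu w)\rstr_{\Gc}=0$, and $\beta_\thet[w]=0$ at $0$.

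For $v\in\tilde K$, I would apply Green's second identity on the punctured region $\Omega\setminus\overline{B_\ep(0)}$ and let $\ep\downarrow 0$ to obtain
\begin{align*}
\int_{\Omega'}\phi\bar v\,\dee x
&= \int_{\pd\Omega}\bigl(w\,\overline{\pd_\nu v}-(\pd_\nu w)\bar v\bigr)\,\dee S \\
&\quad+\lim_{\ep\downarrow 0}\int_{\pd B_\ep\cap\Omega}\bigl(w\,\overline{\pd_r v}-(\pd_r w)\bar v\bigr)\,\dee S.
\end{align*}
The $\Gc$-portion vanishes pointwise because both $w$ and $v$ satisfy $(u-f\pd_\nu u)\rstr_{\Gc}=0$. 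Projecting onto the angular eigenbasis $\{\Theta_n\}$ from Proposition \ref{propmarlrozenmain}, the $\pd B_\ep$-limit splits into modes: the $n\geq 1$ terms are $O(\ep^{2n})$ and vanish, while the $n=0$ term reduces to $\lim_{r\downarrow 0}[w_0,\bar v_0](r)$, which equals zero because $w_0,v_0\in D(L_0')$ and self-adjointness of $L_0'$ forces the Lagrange bracket of any two of its domain elements to vanish at the singular endpoint. Since $w\rstr_\Gamma=0$, what is left is $-\int_\Gamma(\pd_\nu w)\bar v\,\dee S=0$ for all $v\in\tilde K$. As $v\mapsto v\rstr_\Gamma$ is surjective from $\tilde K$ onto $H^{1/2}(\Gamma)$ by unique solvability of the forward BVP, we conclude $\pd_\nu w\rstr_\Gamma=0$ in $H^{-1/2}(\Gamma)$.

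Now $w$ solves $(-\Delta+q)w=0$ on $\Omega\setminus\overline{\Omega'}$ (since $\tilde\phi$ vanishes there) together with $w\rstr_\Gamma=\pd_\nu w\rstr_\Gamma=0$, so Lemma \ref{lemuniqcont}(ii) gives $w\equiv 0$ on $\Omega\setminus\Omega'$. Interior $H^2$ regularity across $\pd\Omega'\in C^2$ then yields the vanishing Cauchy data $w\rstr_{\pd\Omega'}=\pd_\nu w\rstr_{\pd\Omega'}=0$. For arbitrary $v\in K$, Green's identity on $\Omega'$ combined with $(-\Delta+q)v=0$ gives
\[
\int_{\Omega'}\phi\bar v\,\dee x
= \int_{\Omega'}\bigl((-\Delta+q)w\bigr)\bar v - w\,\overline{(-\Delta+q)v}\,\dee x
= \int_{\pd\Omega'}\bigl(w\,\overline{\pd_\nu v}-(\pd_\nu w)\bar v\bigr)\,\dee S = 0,
\]
and Hahn-Banach closes the argument.

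The hard part will be controlling the $\pd B_\ep$-limit near the singular point $0$, where standard Green's identity fails because neither $w$ nor $v$ has $H^2$ regularity there. The Marletta-Rozenblum machinery of Section \ref{secMarlRozen} is precisely designed for this: the $\beta_\thet$ condition restricts $w_0$ and $v_0$ to the self-adjoint domain $D(L_0')$, where the relevant Lagrange bracket automatically vanishes, so the singular contribution to Green's formula disappears in the limit. Everything else is routine elliptic duality plus the unique continuation of Lemma \ref{lemuniqcont}.
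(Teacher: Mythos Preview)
Your argument is correct and shares the paper's overall skeleton: set $w=T^{-1}(\text{annihilating datum})$, derive $\pd_\nu w\rstr_\Gamma=0$, apply Lemma~\ref{lemuniqcont}\ref{lemuniqcontbdry} to kill $w$ outside $\Omega'$, then integrate by parts on $\Omega'$. The genuine difference is in how the intermediate identity $\int_\Gamma(\pd_\nu w)\bar v=0$ is obtained.

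You attack the singularity head-on: Green's formula on $\Omega\setminus\overline{B_\ep(0)}$, then kill the $\pd B_\ep$ contribution by projecting onto the angular basis $\{\Theta_n\}$ and invoking the Lagrange-bracket identity for the $n=0$ mode. This is precisely the computation the paper performs later in Lemma~\ref{lemGreentypeformula} (see \eqref{eqndetu1u2}), so your route effectively absorbs that lemma into the density proof. One small point to tighten: your claim that the $n\geq1$ contributions are $O(\ep^{2n})$ tacitly assumes the $n$-th radial component of $v\in\tilde K$ is the $L^2$ (regular) solution near $0$; since $\tilde K$ is defined only through $H^2_{\loc}(\Omega^*)$ this deserves a line of justification---for instance, a non-$L^2$ mode $\sim r^{-n}$ would make $\beta_\thet[v]$ blow up. Once that is said, the $n\geq1$ brackets in fact vanish identically (both components are multiples of the same solution), not merely as $O(\ep^{2n})$.

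The paper instead \emph{sidesteps} the singular point altogether. It splits each $g\in\tilde K$ as $g_0+g_1$ with $g_0\in D(T)$ and $g_1\in H^2(\Omega_2)$ supported in a collar $\Omega_2$ of $\Gamma$ bounded away from $0$. The $g_0$-term is handled by abstract self-adjointness of $T$ (no integration by parts near the origin at all), and the $g_1$-term by ordinary Green's formula on $\Omega_2$. The price is a somewhat technical construction of $g_1$ via the inverse trace theorem. Your approach trades that construction for the explicit singular-limit computation; either is legitimate, and yours is arguably more transparent given that the bracket calculation is needed elsewhere in the paper anyway.
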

	
	\begin{proof}
		
		We adapt the proofs of \cite[Prop. 5.1-2]{browmarlreye2016}. For any measurable $A\subset\mathbb{R}^d$ denote by $\lan\cdot,\cdot\ran_{A}$ the inner product on $L^2(A)$. Let $v\in K$ such that $\lan g,v\ran_{\Omega'}=0$ for every $g\in\tilde K$; we aim to show $v=0$. By the Resolvent Hypothesis we may uniquely define $w\in D(T)$ to solve $Tw=\chi_{\Omega'}\overline v$, where $\chi_A(x)=1~(x\in A);~0~(x\notin A)$.
		
		Now we make some technical definitions (see Fig. \ref{figdomain2}): the sub-domain $\Omega_2\subset\Omega\setminus\overline{\Omega'}$ is taken to have boundary $\pd\Omega_2=\overline{\Gamma\cup\Gamma'\cup\tilde\Gamma}\in C^{2,1}$ such that $\Gamma$, $\Gamma'$ and $\tilde\Gamma$ are all relatively open and disjoint. Here $\tilde\Gamma$ continuously extends $\Gamma$ in $\pd\Omega_2\cap\pd\Omega$ at both its endpoints, $\Gamma'$ is entirely contained in $\Omega\setminus\overline{\Omega'}$, and $0\notin \pd\Omega_2$. This means $\Omega_2$ is a neighbourhood of $\Gamma$, and its complement is separated from $\Gamma$, i.e.,  $\overline{\Omega\setminus\Omega_2}\cap\overline\Gamma=\emptyset$. Take a smooth cut-off function $\mu$ on $\Omega_2$ to be $1$ in a neighbourhood of $\Gamma$ and $0$ in a neighbourhood of $\Gamma'$. Assume without loss of generality that the level curves of $\mu$ are orthogonal to $\tilde\Gamma$.
		
		Consider $g\in\tilde K\subset H^2(\Omega_2)$. We wish to make a decomposition of $g$ into two parts: $g_0\in D(T)$, and $g_1\in H^2(\Omega_2)$ which is supported in $\Omega_2$ and satisfies
		\begin{equation}
		\label{eqng1gonGamma}
			(g_1-f\pd_\nu g_1)\rstr_{\Gamma}=(g-f\pd_\nu g)\rstr_{\Gamma}.
		\end{equation}
		Firstly extend $f$ to a bounded, a.e. continuous function in the interior of $\Omega$. By the trace theorem \cite[Thm. 8.7]{wloka1987} $(g-f\pd_\nu g)\rstr_\Gamma$ can be extended by $0$ to $\tilde F_2:=(g-f\pd_\nu g)\rstr_{\pd\Omega}\in H^{1/2}(\pd\Omega)$. Take any $F_2\in H^{1/2}(\pd\Omega_2)$ which agrees with $\tilde F_2$ on $\Gamma\cup\tilde\Gamma$ and is $0$ on $\Gamma'$. Define $F_1=\mu g\rstr_{\pd\Omega_2}\in H^{3/2}(\pd\Omega_2)$.
		
			\begin{center}
			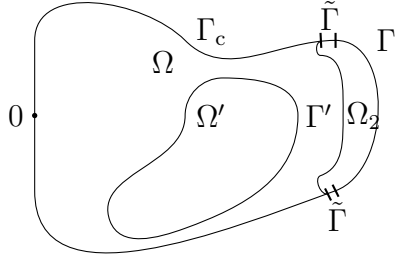
\begin{figure}[hb]
				\centering
				\begin{tikzpicture}
				\draw [-,black] (-2,1) to[out=90,in=135] (0,1) to[out=-45,in=180] (2,1) to[out=0,in=20] (2,-1) to[out=200,in=270] (-2,-1) to (-2,1);
				\draw [thick,black] (2,1.1) to (2,0.9);
				\draw [thick,black] (1.8,1.1) to (1.81,0.9);
				\draw [thick,black] (2.04,-1.08) to (1.96,-0.92);
				\draw [thick,black] (1.94,-1.13) to (1.86,-0.97);
				\fill (-2,0) circle (1pt);
				\node[left] at (-2,0) {$0$};
				\node[right] at (2.4,1) {$\Gamma$};
				\node[below left] at (0,1) {$\Omega$};
				
				\draw [-,black] (0,-1.5) to[out=200,in=290] (-1,-1.4) to[out=110,in=270] (0,0) to[out=90,in=180] (0.5,0.5) to[out=0,in=90] (1.5,0) to[out=270,in=20] (0,-1.5);
				\node[right] at (0,0) {$\Omega'$};
				
				\draw[-,black] (1.9,-1.04) to[out=200,in=200] (1.8,-0.8) to[out=20,in=270] (2.1,0) to[out=90,in=0] (1.8,0.8) to[out=180,in=180] (1.8,0.98);
				\node[right] at(2,0) {$\Omega_2$};
				\node[left] at (2.1,0) {$\Gamma'$};
				\node[above left] at (2.2,1) {$\tilde\Gamma$};
				\node[below left] at (2.3,-1) {$\tilde\Gamma$};
				\node[above right] at (0,0.8) {$\Gc$};
				\end{tikzpicture}
				\caption{an example admissible domain $\Omega$ containing $\Omega'$ and $\Omega_2$}
				\label{figdomain2}
			\end{figure}
			\end{center}

		The inverse trace theorem \cite[Thm. 8.8]{wloka1987} guarantees the existence of $g_1\in H^2(\Omega_2)$ such that $g_1\rstr_{\pd\Omega_2}=F_1\in H^{3/2}(\pd\Omega)$. Furthermore, since
		\begin{align}
			(g_1-F_2)/f\rstr_{\pd\Omega_2}
				&=(\mu g-F_2)/f \nonumber\\
				&=
				\left\{\begin{array}{cccc}
%					\big(g-(g-f\pd_\nu g)\big)/f&=&
					\pd_\nu g	&\in H^{1/2}(\Gamma),		&\rm{on}&\Gamma,\\
%					\mu g/f						&=&
					\mu g/f		&\in H^{3/2}(\tilde\Gamma),	&\rm{on}&\tilde\Gamma,\\
					0										&\in H^{1/2}(\Gamma'),		&\rm{on}&\Gamma'
				\end{array}\right.
				\nonumber
		\end{align}
		is clearly in $H^{1/2}(\pd\Omega_2)$, we can choose this $g_1$ to satisfy $\pd_\nu g_1\rstr_{\pd\Omega_2}=(g_1-F_2)/f\rstr_{\pd\Omega_2}\in H^{1/2}(\pd\Omega_2)$. This ensures that $\pd_\nu g_1\rstr_{\Gamma'}=0$; since $g_1\rstr_{\Gamma'}=F_1\rstr_{\Gamma'}=0$ we may extend this $g_1$ by $0$ into $\Omega$, and note that by construction \eqref{eqng1gonGamma} holds.
		
		Now define $g_0=g-g_1$. By checking the \textsc{bc}s on $\Gamma,\tilde\Gamma$ and $\Gc\setminus\tilde\Gamma$, we see that $g_0\in D(T)$.

		Therefore, for any $g\in\tilde K$,
		\begin{align}
			0	&=\lan g,v\ran_{L^2(\Omega')}
				=\lan g_0,T\overline w\ran_{L^2(\Omega)}+\lan g_1,T\overline w\ran_{L^2(\Omega)}\nonumber\\
					&=\lan Tg_0,\overline w\ran_{L^2(\Omega)}+\int_{\Omega_2}g_1(-\Delta+q)w \nonumber\\
				&=\lan(-\Delta+q)g,\overline w\ran_{L^2(\Omega)}
					+\int_{\Gamma\cup\Gamma'\cup\tilde\Gamma}(-g_1\pd_\nu w+w\pd_\nu g_1)
				=-\int_\Gamma g\pd_\nu w,
		\label{eqnintGamgdnuw}
		\end{align}
		where we used the self-adjointness of $T$, Green's formula, and the fact $g_1,w\in H^2(\Omega_2)$. The final equality was achieved by noting that $w-f\pd_\nu w=0=g-f\pd_\nu g$ on $\tilde\Gamma$, $g=\pd_\nu g=0$ on $\Gamma'$, and $w\in D(T)$ means that $w=0$ on $\Gamma$.
		
		Now observe that the $L^2(\Gamma)$-closure of $\{g\rstr_\Gamma~|~g\in\tilde K\}$ is precisely $L^2(\Gamma)$. This is because for any given basis $\psi_n$ of $L^2(\Gamma)$ we can solve
		\begin{align}
			\left\{\begin{array}{ccccc}
				(-\Delta+q)g&=&0&\rm{in }&\Omega,\\
				g-f\pd_\nu g	&=&0&\rm{on }&\Gc,\\
				\beta_\thet{[g]}	&=&0&\rm{at }&0,\\
				g			&=&\psi_n&\rm{on }&\Gamma,
			\end{array}\right.\nonumber
		\end{align}
		as $0$ is in the resolvent set of $T$. Thus, from \eqref{eqnintGamgdnuw}, we see $\pd_\nu w\rstr_\Gamma=0$. Since $w\rstr_\Gamma=0$, the unique continuation in Lemma \ref{lemuniqcont}\ref{lemuniqcontbdry} implies, from $(-\Delta+q)w\rstr_{\Omega\setminus\overline{\Omega'}}=0$, that $w\rstr_{\Omega\setminus\overline{\Omega'}}=0$.
		
		In particular $w\rstr_{\pd\Omega'}=\pd_\nu w\rstr_{\pd\Omega'}=0$, and so
		\begin{align}
			\lan v,v\ran_{L^2(\Omega')}	&=\int_{\Omega'}v(-\Delta+q)w\nonumber\\
%			\\
									&=\int_{\Omega'}w(-\Delta+q)v
										+\int_{\pd\Omega'}(-v\pd_\nu w+w\pd_\nu v)
%										\\
									=0.\nonumber
		\end{align}
		By the unique continuation in Lemma \ref{lemuniqcont}\ref{lemuniqcontball} we deduce that $v\rstr_\Omega=0$.
		%\color{red} THIS PROOF NEEDS $T=T^*$. \color{black}
	\end{proof}

\section{A weighted sum of $q$-values for a singular \bc}
\label{secweightedsum}

	In this section we will prove an analogue to \cite[Prop. 4.1]{imanuhlyama2010} in the case of the \dn map $\Lambda_{q,f,\beta}(0)$ from Inverse Problem \ref{invqfbeta}, as opposed to the more standard \dn map $\Lambda_{q,0}(0)$ used in, e.g., \cite{imanuhlyama2010}. We will use the \textsc{cgo} solutions from Lemma \ref{lemIUYCGO}; by density we can ``approach'' such solutions with those satisfying the singular \textsc{bc}, thanks to Lemma \ref{lemdensity}.

	As in Lemmata \ref{lemuniqcont} and \ref{lemdensity}, we consider $\Omega'\subset\Omega$ to be non-empty, bounded, open and connected subsets of $\mathbb{R}^2$ with $\Omega\setminus\overline{\Omega'}$ connected and $\pd\Omega'\in C^2$. However we are forced, for the same technical reasons as in \cite{imanuhlyama2010}, to accept the restriction $\pd\Omega\in C^\infty$, included as a requirement in our class of admissible domains.

	\begin{prop}
	\label{propwghtdsumq}
	Suppose we have an admissible phase function $\Phi$ (see Definition \textnormal{\ref{defphasefunction}}) and functions $a,a_0,a_1,b_0$ and $b_1$ satisfying \ref{hypafn} (from Lemma \textnormal{\ref{lemIUYCGO}}),\textnormal{\eqref{eqnhypa0a1}} and \textnormal{\eqref{eqnhypb0b1}}, where $M_1,M_2,M_3$ and $M_4$ satisfy \textnormal{\eqref{eqnhypM1}}, \textnormal{\eqref{eqnhypM3}}, \textnormal{\eqref{eqnhypM2}} and \textnormal{\eqref{eqnhypM4}}. Denote by $H_\Phi$ the Hessian matrix of $\Phi$. Let $q_1,q_2\in C^{2+\alpha}(\overline\Omega)$ for some $\alpha>0$, set $q=q_1-q_2$.
	Suppose the \dn maps with the same $f$ and $\beta$ are equal at $\lambda=0$, so $\Lambda_{q_1,f,\beta}(0)=\Lambda_{q_2,f,\beta}(0)$.
	Then, for any $\tau>0$,
	\begin{align}
	\label{eqncompletenessq}
		\sum\limits_{z\in\mathcal{H}}
			&\frac{|a(z)|^2\cos(2\tau\IM[\Phi(z)])}{|\det(\IM[H_\Phi(z)])|^{1/2}}q(z) =
			\nonumber\\
			&=\frac{1}{8\pi}
			\int_\Omega\left[\left(\frac{\tilde M_1-\tilde M_2}{\pd_z\Phi}-4(a_0+b_0)\right)a+
							\left(\frac{\tilde M_3-\tilde M_4}{\overline{\pd_z\Phi}}
								-4\overline{(a_1+b_1)}\right)\overline a\right]q.
	\end{align}

	\end{prop}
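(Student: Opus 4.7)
The proof follows the Imanuvilov--Uhlmann--Yamamoto strategy: establish an orthogonality $\int_\Omega q\,v_1 v_2 = 0$ for the CGO pair, then substitute the explicit CGO expansions and apply stationary phase to extract \eqref{eqncompletenessq}. The principal adaptation required is to bridge between the Dirichlet condition on $\Gc$ satisfied by the CGOs of Lemma \ref{lemIUYCGO} and the singular Berry--Dennis plus Lagrange conditions that govern the DN maps $\Lambda_{q_j,f,\beta}(0)$. This bridge will be built via the density Lemma \ref{lemdensity}.

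The first step is to establish the orthogonality $\int_\Omega q\,u_1 u_2 = 0$ for arbitrary $u_1 \in \tilde K_1$ and $u_2 \in \tilde K_2$. Applying Green's identity on the punctured domain $\Omega \setminus B_\epsilon(0)$ and letting $\epsilon \searrow 0$ produces three boundary contributions to control. On $\Gamma$, symmetry of each $\Lambda_{q_j,f,\beta}(0)$ (from self-adjointness of $T_j$) combined with the hypothesised DN equality gives
\begin{equation*}
\int_\Gamma\big(u_2\pd_\nu u_1 - u_1\pd_\nu u_2\big)
= \langle u_1|_\Gamma, [\Lambda_{q_2,f,\beta}(0) - \Lambda_{q_1,f,\beta}(0)](u_2|_\Gamma)\rangle = 0.
\end{equation*}
On $\Gc$, the Berry--Dennis condition yields the pointwise cancellation $u_2\pd_\nu u_1 - u_1\pd_\nu u_2 = f(\pd_\nu u_2)(\pd_\nu u_1) - f(\pd_\nu u_1)(\pd_\nu u_2) = 0$. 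On $\pd B_\epsilon(0)$, the angular decomposition of Proposition \ref{propmarlrozenmain} isolates the singular $\Theta_0$-mode (since higher modes behave as $O(r^n)$ with $n\geq 1$ and contribute nothing in the limit), and within that mode the Lagrange condition $\beta_\thet[u_j]=0$ reduces the limiting Wronskian to a bracket $[u_0+\beta v_0, u_0+\beta v_0](0^+,\thet)=0$.

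The second step transfers this identity to the CGOs $v_j \in K_j$. By Lemma \ref{lemdensity}, I obtain sequences $u_j^{(n)} \in \tilde K_j$ with $u_j^{(n)} \to v_j$ in $L^2(\Omega')$, where $\Omega' \subset\subset \Omega$ can be chosen to contain the finite critical set $\mathcal{H}$ (since admissibility forces $\mathcal{H}\cap\overline\Gamma=\emptyset$, we can fatten $\Omega'$ up to a thin collar of $\pd\Omega$). The product $v_1 v_2$ has bounded leading-order amplitude by construction, because $e^{\tau\Phi}\cdot e^{-\tau\Phi}\equiv 1$ and the oscillatory cross-terms $e^{\pm 2i\tau\IM\Phi}$ are unimodular. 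Combining this with a cut-off and standard elliptic bounds for $u_j^{(n)}$ (which will be the main technical ingredient here; see the obstacle flagged below) allows me to pass to the limit in $\int_\Omega q\,u_1^{(n)} u_2^{(n)}=0$ to obtain $\int_\Omega q\, v_1 v_2 = 0$.

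The final step is to substitute the CGO forms \eqref{eqnv1} and \eqref{eqnv2} into this vanishing integral, group summands by exponential type, and apply the stationary phase method. The non-oscillating pairings contribute $\int_\Omega q(a^2+\bar a^2)$ at order $1$ and $\tau^{-1}\int_\Omega q[a(a_0+b_0)+\bar a(\bar a_1+\bar b_1)]$ at order $\tau^{-1}$; the oscillating pairings contribute $\int_\Omega q|a|^2 e^{\pm 2i\tau\IM\Phi}$ at order $1$ plus $\tau^{-1}$-corrections. Stationary phase at the nondegenerate saddle points $z\in\mathcal{H}$ of the harmonic function $\IM\Phi$ yields the desired sum with factor $|\det\IM[H_\Phi(z)]|^{-1/2}$; matching asymptotic orders in $\tau$ and invoking the explicit constraints \eqref{eqnhypa0a1}, \eqref{eqnhypb0b1}, \eqref{eqnhypM1}--\eqref{eqnhypM4} to convert $a(a_0+b_0)$ and $\bar a(\bar a_1+\bar b_1)$ into the $\tilde M_j/\pd_z\Phi$ combinations then produces \eqref{eqncompletenessq}. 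The main obstacle I anticipate is the second step: Lemma \ref{lemdensity} provides only $L^2(\Omega')$-convergence, so one must establish uniform elliptic bounds for the approximating sequences up to $\pd\Omega$ in order to control contributions from the annular region $\Omega\setminus\Omega'$ as $n\to\infty$.
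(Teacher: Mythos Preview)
Your overall strategy matches the paper's: establish $\int_\Omega q\,u_1u_2=0$ for $u_j\in\tilde K_j$, pass to the CGOs $v_j$ by density, then quote the stationary-phase computation of \cite{imanuhlyama2010}. Your Step 1 is essentially Lemma \ref{lemGreentypeformula} plus the Alessandrini identity; where the paper introduces an auxiliary $\tilde u_2$ solving $(-\Delta+q_1)\tilde u_2=0$ with boundary trace $u_2\rstr_\Gamma$ to kill the $\Gamma$-contribution, you instead invoke symmetry of the DN map directly, which is equally valid. Your treatment of the small-arc contribution via the Lagrange-bracket determinant identity is also the paper's mechanism (equation \eqref{eqndetu1u2}).

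The substantive discrepancy is in Step 2. The ``obstacle'' you flag is in fact dissolved by a hypothesis you have overlooked: throughout this part of the paper one works under the standing assumption (from Theorem \ref{thmquniqsingbdry}, and invoked explicitly in Lemma \ref{lemGreentypeformula}) that $q_1=q_2$ in a neighbourhood of $\pd\Omega$, so $\supp(q)\subset\Omega'$ for a suitable $\Omega'$. Then $\int_\Omega q\,u_1u_2=\int_{\Omega'}q\,u_1u_2$, and the $L^2(\Omega')$-density of Lemma \ref{lemdensity} is already enough to pass to the limit; no elliptic estimates on the annulus $\Omega\setminus\Omega'$ are needed. Your proposed route through uniform elliptic bounds up to $\pd\Omega$ would in any case run into trouble near the singular point $0$, where elements of $\tilde K$ need not lie in $H^2$ (recall the paper's remark that $u_0+\beta v_0$ fails $H^2$-regularity at $0$). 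So: drop the elliptic-bound programme and simply use $\supp(q)\subset\Omega'$; with that, your argument is complete and aligns with the paper's.
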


	To prove Proposition \ref{propwghtdsumq} we will need an integration-by-parts formula. Since elements of $D(T)$ are not necessarily in $H^2(\Omega)$---in fact, in polar coordinates $\sin({b}^{-1}\log r)+\beta\cos({b}^{-1}\log r)$ may be $C^2$-extended to an element of $D(L')$, and this linear combination clearly fails to be in $H^2$ at $0$---we see that Green's formula cannot be applied. We circumvent this in the following way.

	\begin{lemma}
	\label{lemGreentypeformula}
		Take admissible $\Omega$ and open, connected subset $\Omega'\subset\Omega$. Suppose $\Omega\setminus\overline{\Omega'}$ is connected and $\pd\Omega'\in C^2$. For $j=1,2$ let $q_j$ be admissible potentials with $(q_1-q_2)\rstr_{\Omega\backslash\Omega'}=0$ and $f_j$ admissible boundary functions. Suppose $f_1$ and $f_2$ are identical on $\Gamma_1$, i.e., ${b}_1={b}_2$. Choose an admissible self-adjoint \textsc{bc} $\beta$. Let $u_j$ be any respective solutions to

		\begin{equation}
		\label{eqndoublesystem}
			\left\{\begin{array}{rcccc}
				(-\Delta+q_j)u_j	&=&0	&\rm{in }&\Omega, \\
				u_j-f_j\pd_\nu u_j	&=&0	&\rm{on }&\Gc, \\
				\beta_\thet[u_j]	&=&0	&\rm{at }&0.
			\end{array}\right.
		\end{equation}
		Then, if $\Lambda_{q_1,f_1,\beta}(0)=\Lambda_{q_2,f_2,\beta}(0)$, we have
		\begin{equation}
		\label{eqnpropertyC0}
		\int_\Omega (q_1-q_2)u_1u_2= \int_{\pd\Omega}(u_2\pd_\nu u_1-u_1\pd_\nu u_2).
		\end{equation}
	\end{lemma}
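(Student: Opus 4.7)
The identity is formally an instance of Green's second identity applied to $u_1,u_2$ via $\Delta u_j=q_ju_j$; the sole obstruction is that $u_1,u_2\notin H^2(\Omega)$ in general, since the admissible self-adjoint \bc $\beta_\thet[u_j]=0$ permits an oscillatory limit-circle behaviour at $\vx_0=0$. Away from $0$, however, standard elliptic regularity (Dirichlet-like from the trace on $\Gamma$, Robin-like on $\Gc\setminus\{0\}$) yields $u_j\in H^2_{\loc}(\overline\Omega\setminus\{0\})$. The plan is therefore to excise a small half-disc around $0$, apply Green's identity on the punctured region, and send the radius to zero.

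Concretely, for small $\ep>0$ set $\Omega_\ep:=\Omega\setminus\overline{B(0,\ep)}$ and $C_\ep:=\pd B(0,\ep)\cap\Omega$. Green's formula on $\Omega_\ep$, together with $\Delta u_j=q_ju_j$, gives
$$\int_{\Omega_\ep}(q_1-q_2)u_1u_2 = \int_{\pd\Omega\setminus B(0,\ep)}\!\!\bigl(u_2\pd_\nu u_1-u_1\pd_\nu u_2\bigr) - \ep\int_{-\pi/2}^{\pi/2}\!\bigl(u_2\pd_r u_1-u_1\pd_r u_2\bigr)(\ep,\thet)\,\dee\thet,$$
the last term accounting for the inward orientation of the outward $\Omega_\ep$-normal on $C_\ep$. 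Dominated convergence handles the bulk and the first boundary integrals as $\ep\to0$, so the crux is showing that the arc integral vanishes in the limit. For $\ep$ small enough $C_\ep\subset\Omega_1$, and each $u_j$ admits there the locally convergent angular expansion $u_j(r,\thet)=\sum_{n\geq0}R_{n,j}(r)\Theta_n(\thet)$ of Section \ref{secMarlRozen}. Orthogonality of $\{\Theta_n\}$ on $(-\pi/2,\pi/2)$ collapses the arc integral to $\ep\sum_{n\geq0}\|\Theta_n\|^2(R_{n,1}R_{n,2}'-R_{n,2}R_{n,1}')(\ep)$.

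For each $n$, multiplying the two radial ODEs against the opposing solution and subtracting yields the ``non-constant Wronskian'' identity $\bigl(r(R_{n,1}R_{n,2}'-R_{n,2}R_{n,1}')\bigr)'=r(q_1-q_2)R_{n,1}R_{n,2}$. Integrating from $0^+$ to $\ep$ shows that the $n$th summand equals $\int_0^\ep r(q_1-q_2)R_{n,1}R_{n,2}\,\dee r+\lim_{r\to0^+}r(R_{n,1}R_{n,2}'-R_{n,2}R_{n,1}')(r)$. The integral tends to $0$ with $\ep$ by local integrability, so everything reduces to showing the $0^+$-limit vanishes. For $n\geq1$ the operator $L_n$ is limit-point and $R_{n,j}(r),\,rR_{n,j}'(r)=O(r^n)$, making this trivial. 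The main obstacle is $n=0$: there $L_0$ is limit-circle and $R_{0,j}$ is merely oscillatory. However, the \bc $\beta_\thet[u_j]=0$ on the $\Theta_0$-component reads $[R_{0,j},u_0+\beta v_0](0^+)=0$ for $j=1,2$, and the classical Weyl argument---that functions satisfying the same self-adjoint \bc at a singular endpoint form a maximal isotropic subspace for the Lagrange form---forces $[R_{0,1},R_{0,2}](0^+)=0$. A Cauchy--Schwarz estimate from $u_j\in L^2(\Omega)$ legitimises the interchange of $\lim_\ep$ and $\sum_n$, closing the argument.
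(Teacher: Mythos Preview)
Your proof is correct and follows essentially the same strategy as the paper: excise a half-disc about $0$, apply Green's identity on the punctured region, and show the arc integral vanishes because both $u_j$ satisfy the same limit-circle boundary condition at $0$. The only notable difference is organisational: where you expand $u_j=\sum_n R_{n,j}\Theta_n$ and treat each radial Wronskian separately (invoking isotropy of the Lagrange form for $n=0$), the paper works directly with the two-dimensional bracket $[u_1,u_2](r,\thet)$ via the determinant identity
\[
[u_1,u_2]=b\,e^{2\thet/b}\det\begin{pmatrix}[u_1,v_0]&[u_1,u_0]\\ [u_2,v_0]&[u_2,u_0]\end{pmatrix},
\]
whose columns become collinear as $r\to0$ precisely because $[u_j,u_0+\beta v_0]\to0$; this sidesteps the angular decomposition and the associated interchange of $\lim_\ep$ with $\sum_n$, but is morally the same isotropy argument you cite.
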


	\begin{proof}
		Take a $\delta$-radius half-disc $\Omega_\delta=\delta\Omega_1\subset\Omega_1$ for some $0<\delta<1$ and define $\Omega_{0,\delta}=\Omega\setminus\overline{\Omega_\delta}$. Without losing generality we may assume $\overline{\Omega_1}\cap\overline{\Omega'}=\emptyset$. Set $\Gamma_\delta=\delta\Gamma_1$ and $\Gamma_{1,\delta}$ to be, respectively, the straight and semi-circular parts of $\pd\Omega_\delta$. Then

		\begin{align}
			\int_\Omega(q_1-q_2)u_1u_2&=
								\underbrace{\int_{\Omega_\delta}(q_1-q_2)u_1u_2}_{\displaystyle{~~~=0}}
											+ \int_{\Omega_{0,\delta}}(-u_1\Delta u_2+u_2\Delta u_1) \nonumber\\
										&= \int_{\pd\Omega_{0,\delta}}(u_2\pd_\nu u_1-u_1\pd_\nu u_2)
											\nonumber\\
										&= \int_{\pd\Omega}(u_2\pd_\nu u_1-u_1\pd_\nu u_2)
											-\int_{\pd\Omega_\delta}(u_2\pd_\nu u_1-u_1\pd_\nu u_2)
		\nonumber
%		\end{align}
											\nonumber\\
%		\begin{align}
%						\hspace{3cm}	
						&= \int_{\pd\Omega}(u_2\pd_\nu u_1-u_1\pd_\nu u_2)
											+\int_{\Gamma_\delta}\left(\frac{1}{f_1}
													-\frac{1}{f_2}\right)u_1u_2
											\nonumber\\
										&\hspace{1cm}-\int_{\Gamma_{1,\delta}}(u_2\pd_\nu u_1-u_1\pd_\nu u_2)
											\nonumber\\
%		\end{align}
%		\begin{align}
				\label{eqnpropertyC1}
%					\hspace{4cm}		
					&= \int_{\pd\Omega}(u_2\pd_\nu u_1-u_1\pd_\nu u_2)
											-\int_{\Gamma_{1,\delta}}(u_2\pd_\nu u_1-u_1\pd_\nu u_2),
		\end{align}
		where we applied identity of the $q_j$ outside $\Omega'$, identity of $f_j$ on $\Gamma_1$, and equality of the \dn maps to eliminate various terms, and Green's formula over $\Omega_{0,\delta}$ to achieve the second line. Thus the lemma follows if the second integral on the right-hand side of \eqref{eqnpropertyC1} converges to 0 as $\delta\rightarrow0$.

		Observe that
		\begin{align}
			\int_{\Gamma_{1,\delta}}(u_1\pd_\nu u_2-u_2\pd_\nu u_1)
				&
				=\int_{-\pi/2}^{\pi/2}\delta(u_1\pd_r u_2-u_2\pd_r u_1)(\delta,\thet)\dee\thet
				\nonumber\\
				&
				=\int_{-\pi/2}^{\pi/2}[u_1,u_2](\delta,\thet)\dee\thet.\nonumber
		\end{align}
		Moreover, by expanding the following determinant and calculating $[v_0,u_0](r,\thet)={b}^{-1}{\rm e}^{-2\thet/{b}}$, one can easily see that
		\begin{equation}
		\label{eqndetu1u2}
			[u_1,u_2](r,\thet)={b} {\rm e}^{2\thet/{b}}\left|\begin{array}{cccc}
				{[u_1,v_0]}&{[u_1,u_0]}\\
				{[u_2,v_0]}&{[u_2,u_0]}
			\end{array}\right|(r,\thet).
		\end{equation}
		Now apply $[u_j,u_0+\beta v_0](r,\thet)\rightarrow0$ as $r\rightarrow0$ to see that the columns in the right-hand side of \eqref{eqndetu1u2} become collinear as $r\rightarrow0$. The lemma follows.

	\end{proof}
	\begin{remark}
		The identity \textnormal{\eqref{eqndetu1u2}} is usually written for solutions of ordinary differential equations; see, e.g., \cite[(2.8-9)]{fulton1977}.
	\end{remark}
	\begin{proof}[Proof of Proposition \ref{propwghtdsumq}]
		We consider all solutions $u_j\quad (j=1,2)$ of \eqref{eqndoublesystem} with $f_j=f$, $\beta_j=\beta$. Define $\Lambda = \Lambda_{q_1,f,\beta}(0)=\Lambda_{q_2,f,\beta}(0)$, $h_1=u_1\rstr_\Gamma$ and $h_2=u_2\rstr_\Gamma$. Also define $\tilde{u}_2\in L^2(\Omega)$ by
		\begin{equation}
			\left\{\begin{array}{rclcc}
			(-\Delta+q_1)\tilde{u}_2 			&=&0&\rm{in}&\Omega,\\
			\tilde{u}_2-f\pd_{\nu}\tilde{u}_2	&=&0&\rm{on}&\Gc,\\
			\beta_\thet{[\tilde{u}_2]}			&=&0&\rm{at}&0,\\
			\tilde{u}_2							&=&h_2&\rm{on}&\Gamma,
			\end{array}\right.
		\end{equation}
		i.e., with potential $q_1$ but \bc $h_2=u_2\rstr_\Gamma$ on $\Gamma$.
		By Lemma \ref{lemGreentypeformula} we see
		\begin{align}
		\label{eqnAlessandriniformula}
			\int_\Omega (q_1-q_2)u_1u_2
				&=\int_{\pd\Omega} (u_2\pd_\nu u_1-u_1\pd_nu u_2)\nonumber\\
				&=\int_\Gamma(h_1\Lambda h_2-h_2\Lambda h_1)\nonumber\\
				&=\int_{\pd\Omega}(\tilde{u}_2\pd_\nu u_1
						-u_1\pd_\nu\tilde{u}_2)\nonumber\\
				&=\int_\Omega(q_1-q_1)u_1\tilde{u}_2=0.
		\end{align}
		Note the hypothesis $q_1-q_2=0$ outside $\Omega'$. Clearly $v_j\in K$---see \eqref{eqnv1} and \eqref{eqnv2}---and $u_j\in\tilde K$, so using Lemma \ref{lemdensity} we deduce from \eqref{eqnpropertyC0} that
		\begin{align}
			\int_\Omega(q_1-q_2)v_1(\cdot~;\tau)v_2(\cdot~;\tau)=0\qquad(\tau>0).\nonumber
		\end{align}
		We have now arrived at precisely \cite[Eq. (4.3)]{imanuhlyama2010}. From here on our proof exactly follows that of \cite[Prop. 4.1]{imanuhlyama2010}.
	\end{proof}

\section{Final steps of proof of Theorem \ref{thmquniqsingbdry}}
\label{secconduniq}

	The proof of Theorem \ref{thmquniqsingbdry} is now straight-forward:
	\begin{proof}[Proof of Theorem \ref{thmquniqsingbdry}]
		To prove that $\Lambda_{q_1,f,\beta}(0)=\Lambda_{q_2,f,\beta}(0)$ and $(q_1-q_2)\rstr_{\Omega\setminus\Omega'}=0$ implies $q_1=q_2$ everywhere, simply apply all but one of the steps in the proof of \cite[Thm. 1.1]{imanuhlyama2010}, replacing \cite[Prop. 4.1]{imanuhlyama2010} with our Proposition \ref{propwghtdsumq}.

		Now for the other claim, namely that $\Lambda_{q,f_1,\beta}(0)=\Lambda_{q,f_2,\beta}(0)$ implies $f_1=f_2$. Let $g\in H^{1/2}(\Gamma)$, and choose functions $u_1$ and $u_2$ solving
		\begin{align}
			\left\{\begin{array}{rclcc}
				(-\Delta+q)u_j		&=&0&\rm{in }&\Omega, \\
				u_j-f_j\pd_\nu u_j	&=&0&\rm{on }&\Gc, \\
				\beta_\thet[u_j]			&=&0&\rm{at }&0, \\
				u_j					&=&g&\rm{on }&\Gamma.
			\end{array}\right.\nonumber
		\end{align}
%		where $\Lambda_j:=\Lambda_{q,f_j,\beta}$.
		By hypothesis $\pd_\nu u_1=\pd_\nu u_2=-\Lambda g$. Then $u_j\in H^{2}_{\loc}(\Omega^*)$, so with the definition $u=u_1-u_2$
%		and $f=f_1-f_2$
		we see
		\begin{align}
			\left\{\begin{array}{rclcc}
				(-\Delta+q)u	&=&0&\rm{in }&\Omega, \\
				u&=&0~=~\pd_\nu u&\rm{on }&\Gamma.
			\end{array}\right.\nonumber
		\end{align}
		The unique continuation principle Lemma \ref{lemuniqcont}\ref{lemuniqcontbdry} immediately implies $u=0$ in $\Omega$, whence $u_1=u_2$ in $\Omega$ so $\pd_\nu u_1=\pd_\nu u_2$ on $\Gc$. Thus, along $\Gc$, $f_1=u_1/\pd_\nu u_1=u_2/\pd_\nu u_2=f_2$.
	\end{proof}

\section{The interface Dirichlet-to-Neumann operators}
\label{secinterfaceDNops}

To prove the asymptotics of Theorem \ref{thmnegevasympsgen} we will find upper and lower asymptotic bounds on the difference between the counting functions of the negative eigenvalues for, respectively, $T$ and $L'$ (see \eqref{eqnLprimedecomp} and \eqref{eqnopfulldom} for the operator definitions). To achieve the upper bound we will consider a pencil of interface \dn operators on $\Gi$. In this section we will develop these operators, establishing results for our proof of Theorem \ref{thmnegevasympsgen}. To ensure sign-definiteness of one of these \dn operators we need the admissible boundary function $f$ to be supported in $\Gamma_1$, justifying the corresponding requirement in both Theorems \ref{thmuniqallfreq} and \ref{thmnegevasympsgen}.

These interface \dn operators are defined as follows: for $j=0,1$,
\begin{align}
\Lambda_j(\lambda):H^{1/2}(\Gi)\ni h\mapsto-\pd_{\nu_j} w_j\in H^{-1/2}(\Gi),\nonumber
\end{align}
where $w_j$ solve the boundary-value problems
\begin{equation}
\label{eqnw0}
\left\{\begin{array}{rclcc}
-\Delta w_0&=	&\lambda w_0	&\rm{in }&\Omega_0, \\
w_0		&=	&0			&\rm{on }&\Gamma_0\cup\Gamma, \\
w_0		&=	&h			&\rm{on }&\Gi,
\end{array}\right.
\end{equation}
\begin{equation}
\label{eqnw1}
\left\{\begin{array}{rclcc}
-\Delta w_1	&=	&\lambda w_1	&\rm{in }&\Omega_1, \\
w_1			&=	&f\pd_{\nu_1}w_1&\rm{on }&\Gamma_1, \\
\beta_\thet[w_1]	&=	&0				&\rm{at }&0, \\
w_1			&=	&h				&\rm{on }&\Gi,
\end{array}\right.
\end{equation}
and $\pd_{\nu_j}$ denotes the outward directed normal derivative for the subdomain $\Omega_j$.

\begin{remark}
	\label{remkereigfn}
	It is clear that a real number $\lambda$ is an eigenvalue for $T$ if and only if the pencil of operators $\Lambda_1(\lambda)+\Lambda_0(\lambda)$ has a non-trivial kernel $\mathcal{K}(\lambda)$, since any function in this kernel will correspond to a pair $(w_0,w_1)$ solving, respectively, \textnormal{\eqref{eqnw0}} and \textnormal{\eqref{eqnw1}}, for which
	%		the normal derivatives match on the interface, i.e.,
	$\pd_{\nu_0}w_0=-\pd_{\nu_1}w_1$ on $\Gi$.
\end{remark}
To conduct our analysis we utilise the \emph{normalised} $L^2(-\pi/2,\pi/2)$-basis $\Theta_n$ on $\Gi$ (see Section \ref{secMarlRozen}), defined by
\begin{align}
%		\label{eqndefThetan}
\Theta_n(\thet)=\left\{\begin{array}{cc}
%				\displaystyle{\frac{
k_0{\rm e}^{-\thet/{b}}
%				}{2{b}\sinh(\pi/2{b})}}	
&(n = 0), \\
k_n(n{b}\cos(n\thet)-\sin(n\thet)	)				&(n ~\rm{ even}), \\
k_n(\cos(n\thet)-n{b}\sin(n\thet)	)				&(n ~\rm{ odd}).
\end{array}\right.\nonumber
\end{align}
%		whilst $U_n\in D(L_n)~(n\geq1)$ and $U_0\in D(L_0')$.
%		\begin{remark}
where the $k_n$ are chosen so that $\|\Theta_n\|_{L^2(-\pi/2,\pi/2)}=1$.
%		\end{remark}
Since the sufficiently negative eigenvalues of $T$ can only arise from the presence of the singular \textsc{bc} (regular \textsc{bc}s would yield a spectrum that is bounded below), any function $h=\sum_{n=0}^\infty h_n\Theta_n\in\mathcal{K}(\lambda)$ for $\lambda\ll0$ is either identically $0$ or has non-trivial zeroth component.
It follows that for $\lambda\ll0$ such that $\mathcal K(\lambda)\neq\{0\}$ we may normalise the non-trivial kernel element $h$ so that $h_0=1$.

In the basis $\Theta_n$ on $\Gi$ the map $\Lambda_1(\lambda)$ takes the form of an infinite diagonal matrix, represented in the block partitioned form
\begin{equation}
\label{eqnLambda1block}
\left(\begin{array}{c|c}
m_0(\lambda)	&	\mathbf{0}^T \\
\hline
\mathbf{0}	&	M(\lambda)
\end{array}\right).
\end{equation}
Here $m_0(\lambda):=-\ph_0'(1;\lambda)/\ph_0(1;\lambda)\quad(\ph_0\neq0)$ is the Weyl--Titchmarsh $m$-function for the $L^2(0,1)$-limit-circle ordinary differential problem
\begin{align}
\left\{\begin{array}{rclc}
\displaystyle{-\frac{1}{r}\frac{\dee}{\dee r}\left(r\frac{\dee \ph_0}{\dee r}(r;\lambda)\right)
	-\frac{1}{{b}^2r^2}}\ph_0(r;\lambda)	&=&\lambda\ph_0(r;\lambda)
&\big(r\in(0,1)\big), \\
r\big(\ph_0\pd_r(u_0+\beta v_0)-(\pd_r\ph_0)(u_0+\beta v_0)\big)(r;\lambda)
&\rightarrow&0&(r\rightarrow0),
\end{array}\right.\nonumber
\end{align}
the infinite column-vector of zeros is denoted by $\mathbf{0}$, and $M(\lambda)$ is the diagonal submatrix whose $n$-th diagonal term ($n=1,2,3,\ldots$) is the $L^2(0,1)$-limit-point $m$-function $m_n(\lambda):=-\ph_n'(1;\lambda)/\ph_n(1;\lambda)\quad(\ph_n\neq0)$ for the \textsc{ode}
\begin{equation}
\label{eqnnthPLP}
-\frac{1}{r}\frac{\dee}{\dee r}\left(r\frac{\dee \ph_n}{\dee r}(r;\lambda)\right)
+\frac{n^2}{r^2}\ph_n(r;\lambda)	=\lambda\ph_n(r;\lambda) \qquad\big(r\in(0,1)\big).
\end{equation}
In the same basis, $\Lambda_0(\lambda)$ lacks this diagonal structure, although it is symmetric. We may nevertheless use the basis to partition it the same way, labelling it
\begin{align}
\left(\begin{array}{c|c}
a(\lambda)		&	\vb(\lambda)^T \\
\hline
\vb(\lambda)	&	C(\lambda)
\end{array}\right) .\nonumber
\end{align}

\begin{lemma}
	\label{lemHerg}
	~
	
	\begin{enumerate}[label=(\roman*)]
		\item The pencil of \dn maps $\Lambda_1+\Lambda_0$ is a holomorphic, operator-valued function on $\mathbb{C}\setminus\mathbb{R}$.
		\item The derivative of the pencil is a compact operator on the Sobolev space $H^k(\Gi)$ for any $k\geq1/2$.
		\item If the admissible boundary function $f$ is supported in $\Gamma_1$ then the quadratic form $\lan(\Lambda_1+\Lambda_0)(\lambda)h,h\ran_{L^2(\Gi)}$ has imaginary part of the same sign as $\IM(\lambda)$. 
	\end{enumerate}
\end{lemma}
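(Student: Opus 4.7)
The plan is to handle the three parts separately; the Berry--Dennis singularity at $0$ is the main technical concern throughout, and I address it last.

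\textbf{Part (i).} For $\lambda\in\mathbb{C}\setminus\mathbb{R}$, $\lambda$ lies in the resolvent set of the Dirichlet Laplacian $S_0$ on $\Omega_0$ and of the self-adjoint operator $S_1$ on $\Omega_1$ obtained from $L'$ in Proposition \ref{propmarlrozenmain} by additionally imposing a Dirichlet condition on $\Gi$. Given $h\in H^{1/2}(\Gi)$, I pick an $H^1$-lift $\tilde h_j$ of $h$ into $\Omega_j$ compatible with the homogeneous \textsc{bc}s on the other portions of $\pd\Omega_j$; for $j=1$ the lift can be arranged to be supported away from $0$, so the Berry--Dennis condition is trivially met. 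Then
\begin{equation*}
w_j=\tilde h_j+(S_j-\lambda)^{-1}\bigl((-\Delta-\lambda)\tilde h_j\bigr)
\end{equation*}
solves the respective \textsc{bvp} and depends holomorphically on $\lambda$ because the resolvent $(S_j-\lambda)^{-1}$ is operator-valued holomorphic off the spectrum. Continuity of the normal-trace map $H^1(\Omega_j)\to H^{-1/2}(\Gi)$ transfers holomorphicity to each $\Lambda_j(\lambda)$, and hence to their sum.

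\textbf{Part (ii).} Differentiating \eqref{eqnw0} and \eqref{eqnw1} in $\lambda$, the function $\pd_\lambda w_j$ satisfies the homogeneous versions of the original \textsc{bc}s with zero interface datum and forcing term $w_j$ on the right-hand side of the equation. Since $\Gi$ is strictly separated from $0$, standard interior and boundary elliptic regularity near $\Gi$ yields that $\pd_\lambda w_j$ is two Sobolev orders more regular than $w_j$ in a neighbourhood of $\Gi$. Thus $\Lambda_j'(\lambda)$ factors as a bounded operator $H^k(\Gi)\to H^{k+2}(\Gi)$ followed by the compact embedding $H^{k+2}(\Gi)\hookrightarrow H^k(\Gi)$ on the bounded arc $\Gi$, giving compactness on $H^k(\Gi)$ for every $k\geq 1/2$.

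\textbf{Part (iii).} I compute $\IM\lan(\Lambda_0+\Lambda_1)(\lambda)h,h\ran_{L^2(\Gi)}$ via Green's identity on each subdomain. On $\Omega_0$, because $\supp f\subset\Gamma_1$ forces a Dirichlet \textsc{bc} on $\Gamma\cup\Gamma_0$, a direct integration by parts yields
\begin{equation*}
\lan\Lambda_0(\lambda)h,h\ran_{L^2(\Gi)}=\lambda\|w_0\|_{L^2(\Omega_0)}^2-\|\nabla w_0\|_{L^2(\Omega_0)}^2.
\end{equation*}
On $\Omega_1$ I cannot apply Green directly because $w_1\notin H^2(\Omega_1)$ in general; instead I integrate on $\Omega_1\setminus\delta\Omega_1$ and take $\delta\to 0$ as in Lemma \ref{lemGreentypeformula}. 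The $\Gamma_1$-boundary integrand is real, since $w_1=f\pd_{\nu_1}w_1$ with real $f$ gives $(\pd_{\nu_1}w_1)\overline{w_1}=f|\pd_{\nu_1}w_1|^2$. The contribution over the $\delta$-arc is, in imaginary part, a constant multiple of the Lagrange bracket $[w_1,\overline{w_1}](\delta,\thet)$, which vanishes as $\delta\to 0$ by the determinantal identity \eqref{eqndetu1u2} and the fact that both $w_1$ and $\overline{w_1}$ satisfy $\beta_\thet[\cdot]=0$ (as $u_0$, $v_0$, and $\beta$ are all real). Hence
\begin{equation*}
\lan\Lambda_1(\lambda)h,h\ran_{L^2(\Gi)}=\lambda\|w_1\|_{L^2(\Omega_1)}^2-\|\nabla w_1\|_{L^2(\Omega_1)}^2+R
\end{equation*}
with $R\in\mathbb{R}$. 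Adding and taking imaginary parts gives $\IM\lan(\Lambda_0+\Lambda_1)(\lambda)h,h\ran=\IM(\lambda)\bigl(\|w_0\|^2+\|w_1\|^2\bigr)$, which has the claimed sign since $w_0$ and $w_1$ cannot simultaneously vanish unless $h=0$.

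\textbf{Main obstacle.} The hardest step is the rigorous vanishing of the $\delta$-arc contribution in part (iii), because $w_1$ fails to be $H^2$ near $0$. This is controlled by the Lagrange-bracket cancellation mechanism already invoked in Lemma \ref{lemGreentypeformula}: functions in the relevant self-adjoint domain have mutually vanishing Lagrange brackets as $r\to 0$, even though each function individually lacks $H^2$ regularity there. The same mechanism makes parts (i)--(ii) routine, since the lifts $\tilde h_j$ are chosen supported away from the singular point.
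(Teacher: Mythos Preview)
Parts (i)--(ii) and the $\Lambda_0$ computation in (iii) match the paper's argument in substance. For $\Lambda_1$ in (iii), however, your displayed identity
\begin{equation*}
\lan\Lambda_1(\lambda)h,h\ran_{L^2(\Gi)}=\lambda\|w_1\|_{L^2(\Omega_1)}^2-\|\nabla w_1\|_{L^2(\Omega_1)}^2+R
\end{equation*}
is not valid as written: generically $w_1\notin H^1(\Omega_1)$, since the singular solutions $u_0,v_0$ in \eqref{eqnu0v0} satisfy $|\pd_r u_0|\sim r^{-1}$ and hence $\int_0^1 r|\pd_r u_0|^2\,\dee r=+\infty$. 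The gradient integral over $\Omega_1\setminus\delta\Omega_1$ therefore diverges as $\delta\to0$, and you cannot pass to the limit in the full Green identity. Your excision-plus-bracket mechanism does rescue the \emph{imaginary part}, because the divergent contribution is real; but you must compute $(\lambda-\overline\lambda)\int_{\Omega_1\setminus\delta\Omega_1}|w_1|^2$ directly from $\overline{w_1}\Delta w_1-w_1\Delta\overline{w_1}$ and never isolate $|\nabla w_1|^2$.

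The paper avoids this trap by a different route: it decomposes $w_1=W_0+W$ along the angular basis $\{\Theta_n\}$, where $W=\sum_{n\geq1}W_n\in H^2(\Omega_1)$ admits Green's formula outright, while the singular zeroth mode $W_0=c_0R_0(r)\Theta_0(\thet)$ is handled one-dimensionally via the bracket identity \eqref{eqndetU0} for the radial solution $R_0$. This decomposition has the added virtue that limit-circle theory for $L_0$ directly furnishes existence of the limits $[R_0,u_0](0^+)$ and $[R_0,v_0](0^+)$ needed in the determinantal argument; in your two-dimensional version the corresponding existence of $[w_1,u_0](0^+,\thet)$ and $[w_1,v_0](0^+,\thet)$ is not automatic and would in practice be justified by the same mode decomposition.
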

\begin{remark}
	This makes $\Lambda_1+\Lambda_0$ an \emph{operator-valued Herglotz function}.
\end{remark}
\begin{proof}\textit{(i) and (ii).}
	It suffices to show holomorphicity of any \dn map, since the sum of any two will also have the property. Let ${\tilde\Omega}\subset\mathbb{R}^2$ be bounded and simply connected with piecewise $C^2$ boundary $\pd{\tilde\Omega}$, and suppose $\tilde\Gamma$ is a connected subset of $\pd{\tilde\Omega}$, whilst $\tilde\Gamma_c=\pd{\tilde\Omega}\setminus\tilde\Gamma$ is its complement. Denote by $S$ the self-adjoint operator $-\Delta$ with homogeneous Dirichlet conditions on $\tilde\Gamma$ and \emph{any} \textsc{bc} ${\rm{SA}}[\cdot]=0$ on $\tilde\Gamma_c$ that suffices to make $S$ self-adjoint. Then define the \dn operator $\tilde\Lambda(\lambda):h\mapsto\pd_\nu w\rstr_{\tilde\Gamma}$, where
	\begin{equation}
	\label{eqnw}
	\left\{\begin{array}{ccccc}
	-\Delta 	w&=	&\lambda w	&\rm{in }&{\tilde\Omega}, \\
	{\rm{SA}}[w]&=	&0			&\rm{on }&{\tilde\Gamma}_c, \\
	w			&=	&h			&\rm{on }&{\tilde\Gamma}.
	\end{array}\right.
	\end{equation}
	
	If $h\in H^k({\tilde\Gamma})$ for some $k\geq1/2$ then we may choose a $w_0\in L^2({\tilde\Omega})$ taking the value $h$ on ${\tilde\Gamma}$. Then if SA$[w_0]=0$ and $\Delta w_0\in L^2({\tilde\Omega})$, we see that the solution to the boundary-value problem \eqref{eqnw} is given by
	\begin{align}
	w = \big(\mathbbm{1}-(S-\lambda)^{-1}(-\Delta-\lambda)\big)w_0.\nonumber
	\end{align}
	Hence, in terms of the trace maps
	\begin{align}
	%			\begin{array}{ccccc}
	D(\gamma_0)&=\{v\in L^2({\tilde\Omega})~|~\Delta v\in L^2({\tilde\Omega}),{\rm{SA}}[v]=0\},
	%				\nonumber\\
	\qquad
	&\gamma_0\rstr_{C^0({\tilde\Omega})}:v\mapsto v\rstr_{\tilde\Gamma}, \nonumber\\
	D(\gamma_1)&=\{v\in L^2({\tilde\Omega})~|~{\rm{SA}}[v]=0\},
	%				\nonumber\\
	%				\qquad
	&\gamma_1\rstr_{C^1({\tilde\Omega})}:v\mapsto\pd_\nu v\rstr_{\tilde\Gamma},\nonumber
	%			\end{array}
	\end{align}
	we see that $\tilde\Lambda(\lambda) = \gamma_1\big(\mathbbm{1}-(S-\lambda)^{-1}(-\Delta-\lambda)\big)\gamma_0^{-1}$, where by $\gamma_0^{-1}$ we mean any right-inverse of $\gamma_0$. Thus we find, \emph{via} the resolvent formula \cite[Thm. VIII.2]{reedsimon1980}, that
	\begin{align}
	\frac{\tilde\Lambda(\lambda)-\tilde\Lambda(\mu)}{\lambda-\mu}
	&=\gamma_1\frac{(S-\lambda)^{-1}(\Delta+\lambda)-
		(S-\mu)^{-1}(\Delta+\mu)}{\lambda-\mu}\gamma_0^{-1}
	\nonumber\\
	&=\gamma_1\frac{\lambda(S-\lambda)^{-1}-\mu(S-\mu)^{-1}+
		\big((S-\lambda)^{-1}-(S-\mu)^{-1}\big)(\Delta)}{\lambda-\mu}\gamma_0^{-1}
	\nonumber
	\\
	\label{eqnDtoNNewt}
	&=\gamma_1(S-\lambda)^{-1}\big(\mathbbm{1}+\mu(S-\mu)^{-1}+(S-\mu)^{-1}(\Delta)\big)
	\gamma_0^{-1}
	\end{align}
	is a smoothing operator of order $-1$ on the scale of Sobolev spaces on ${\tilde\Gamma}$, since it is a product (from right to left) of operators with order $1/2$, $0$, $-2$ and $1/2$. By Sobolev embedding \cite[Thm. V.4.18]{edmundsevans1987} this Newton quotient is compact, and in particular is bounded. As $\mu\rightarrow\lambda$, its norm limit is the compact operator
	\begin{align}
	\tilde\Lambda'(\lambda)=\gamma_1(S-\lambda)^{-1}\big(\mathbbm{1}-
	(S-\lambda)^{-1}(-\Delta-\lambda)\big)\gamma_0^{-1}.\nonumber
	\end{align}
	Of course, this limit is only defined for $\lambda$ in the resolvent set of $S$, which owing to $S=S^*$ contains $\mathbb{C}\setminus\mathbb{R}$.
	\newline
	
	\noindent\textit{(iii).} To establish the Herglotz property, we need to examine the imaginary parts of both $\lan\Lambda_{1}(\lambda)h,h\ran_{\Gi}$ and $\lan\Lambda_{0}(\lambda)h,h\ran_{\Gi}$. The latter involves straight-forward integration by parts. Using Green's formula and the solution $w_0$ to \eqref{eqnw0}, and in particular the homogeneous Dirichlet \bc on $\Gamma\cup\Gamma_0$ (since $f\rstr_{\Gamma_0}=0$), we see
	\begin{align}
	\label{eqnantiHerg0}
	\lan\Lambda_0(\lambda)h,h\ran_{\Gi}
	&=\int_{\Gi}(-\pd_{\nu_0} w_0)\overline{w_0}
	%				\nonumber\\
	%				&
	=-\int_{\Omega_0}\big((\Delta w_0)\overline{w_0}+|\nabla w_0|^2\big) \nonumber\\
	&=\lambda\int_{\Omega_0}|w_0|^2-\int_{\Omega_0}|\nabla w_0|^2.
	\end{align}
	This clearly has imaginary part of the same sign as that of $\lambda$.
	
	On the other hand, integration by parts fails for the solutions of \eqref{eqnw1}, since, e.g., the solutions $u_0$ and $v_0$ defined in \eqref{eqnu0v0} are not in $H^2(\Omega)$. Instead, we decompose the solution as in the proof of Lemma \ref{lemGreentypeformula}, effectively treating $-\lambda$ and $-\lambar$ as, respectively, the potentials $q_1$ and $q_2$ (we no longer have equality in a neighbourhood of the boundary). Recall the radial solutions $R_n$ for \eqref{eqnradialODE}; we assume they have unit $L^2$-norm. Then letting $w_1$ solve \eqref{eqnw1}, there are real constants ${c}_n~(n=0,1,2,\ldots)$ so that $W_n(r,\thet)={c}_nR_n(r)\Theta_n(\thet)$ form the terms in a series expansion: $w_1=\sum_{n=0}^\infty W_n=:W_0+W$. The term $W$ arises from the regular part of the problem, and is in $H^2(\Omega_1)$ \cite{marlrozen2009}, and moreover all the $W_n$ are pairwise orthogonal in $L^2(\Omega_1)$. Hence, by Green's formula,
	\begin{align}
	(\lambda-\lambar)\int_{\Omega_1}w_1\overline{w_1}
	&=(\lambda-\lambar)\int_{\Omega_1}W_0\overline{W_0}-
	\int_{\Omega_1}\big((\Delta W)\overline W-W\Delta\overline W\big) \nonumber\\
	&={c}_0^2(\lambda-\lambar)\int_0^1r\dee r~|R_0(r)|^2
	\int_{-\pi/2}^{\pi/2}\dee\thet~|\Theta_0(\thet)|^2
	\nonumber\\
	&\hspace{1cm}-
	\int_{\pd\Omega_1}\big((\pd_{\nu_1}W)\overline W-W\pd_{\nu_1}\overline W\big)
	\nonumber
	\\
	%		\end{align}
	%		\begin{align}
	\label{eqnImLambda1}
	&={c}_0^2(\lambda-\lambar)\int_0^1r\dee r~|R_0(r)|^2+\int_{\Gi}\big(W\pd_{\nu_1}\overline W-(\pd_{\nu_1}W)\overline W\big).
	\end{align}
	Without loss of generality we scale ${c}_0$ to be $1$, set $0<\delta<1$ and examine
	\begin{align}
	\label{eqnImLambda11}
	(\lambda-\lambar)\int_\delta^1r\dee r~R_0(r)\overline{R_0(r)}
	&=R_0(1)\overline{R_0'(1)}-R_0'(1)\overline{R_0(1)}-
	\delta\big(R_0(\delta)\overline{R_0'(\delta)}-
	R_0'(\delta)\overline{R_0(\delta)}\big) \nonumber\\
	&=\int_{\Gi}\big(W_0\pd_1\overline{W_0}-(\pd_1W_0)\overline{W_0}\big)-[R_0,\overline{R_0}](\delta).
	\end{align}
	Clearly the lemma will follow if we can show that $[R_0,\overline{R_0}](\delta)$ vanishes as $\delta\rightarrow0$, since by dominated convergence the left-hand side of \eqref{eqnImLambda11} tends to $(\lambda-\overline\lambda)\int_0^1r|R_0(r)|^2\dee r$. Combining this with \eqref{eqnImLambda1} yields
	\begin{align}
	\IM(\lambda)\int_{\Omega_1}|w_1|^2=\IM\left(\lan\Lambda_1(\lambda)h,h\ran_{L^2(\Gi)}\right).\nonumber
	\end{align}
	
	Similarly to Lemma \ref{lemGreentypeformula}, we relate the boundary behaviour of $R_0$ to that of the solutions $u_0$ and $v_0$ for $\lambda=0$ by applying the elementary identity
	\begin{equation}
	\label{eqndetU0}
	[R_0,\overline{R_0}]=-{b}\left|\begin{array}{cc}
	[R_0,u_0] & [R_0,v_0] \\
	{[\overline{R_0},u_0]} & {[\overline{R_0},v_0]}
	\end{array}\right|,
	\end{equation}
	since $u_0$ and $-{b} v_0$ form a fundamental system satisfying $[u_0,v_0]=-{b}^{-1}$. Thus, since both $R_0$ and $\overline{R_0}$
	%		 its conjugate 
	in its place satisfy $[R_0,u_0+\beta v_0](0^+)=0$, we see that the columns in the right-hand side of \eqref{eqndetU0} become collinear as its argument approaches $0$.
\end{proof}

\begin{lemma}
	\label{lemcorolanalyticFredholm}
	Let the admissible boundary function $f$ be supported in $\Gamma_1$, and let $\lambda$ be less than the infima of the spectra of each $L_j~(j=1,2,3,\ldots$; see Section \ref{secMarlRozen}$)$ and of the Laplace operator in $\Omega_0$ with homogeneous Dirichlet \textsc{bc}s. If $z\in\mathbb{C}_{\rm c}:=\mathbb{C}\setminus[0,+\infty)$ then both $(M+C)(\lambda)$ and $(M+C)(\lambda+z)$ are invertible matrices, and satisfy
	\begin{align}
	[(M+C)(\lambda+z)]^{-1}=\left(\mathbbm{1}+[(M+C)(\lambda)]^{-1}\int_{[\lambda,\lambda+z]}(M+C)'\right)^{-1}
	%			\times
	\nonumber
	%			\\
	%			&\times
	[(M+C)(\lambda)]^{-1}.\nonumber
	\end{align}
\end{lemma}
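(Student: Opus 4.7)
The plan is to first show $(M+C)(\lambda)$ is invertible, then extend this to $(M+C)(\lambda+z)$ for every $z\in\mathbb{C}_{\rm c}$, and finally obtain the identity by integrating $(M+C)'$ along the segment $[\lambda,\lambda+z]$ and factoring. For the invertibility of $(M+C)(\lambda)$ I would argue as follows. The diagonal matrix $M(\lambda)$ has entries $m_n(\lambda)=-\ph_n'(1;\lambda)/\ph_n(1;\lambda)$ for the regular radial problems \eqref{eqnnthPLP}, and since $\lambda$ lies below $\inf\sigma(L_n)$ for every $n\geq1$, each $m_n(\lambda)$ is real and sign-definite by standard Weyl--Titchmarsh monotonicity below the spectrum. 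The block $C(\lambda)$ coincides with the Dirichlet-to-Neumann map for $-\Delta-\lambda$ on $\Omega_0$ with zero trace on $\Gamma\cup\Gamma_0$, restricted to the orthogonal complement of $\Theta_0$ in $H^{1/2}(\Gi)$; since $\lambda$ is below this Dirichlet spectrum, $C(\lambda)$ is self-adjoint and sign-definite on this subspace. A Green's-formula computation analogous to \eqref{eqnantiHerg0} shows $M(\lambda)$ and $C(\lambda)$ carry the same sign, so $(M+C)(\lambda)$ is coercive on its form domain and hence invertible.

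For $(M+C)(\lambda+z)$ with $z\in\mathbb{C}_{\rm c}$ I would split into cases. If $z\in(-\infty,0)$ then $\lambda+z<\lambda$ remains below every relevant spectrum and the preceding argument applies verbatim. Otherwise $\IM(z)\neq0$, and restricting Lemma \ref{lemHerg}(iii) to test functions in the orthogonal complement of $\Theta_0$---which isolates precisely the $(M+C)$ block---yields that $\textnormal{sgn}\,\IM\lan(M+C)(\lambda+z)h,h\ran_{L^2(\Gi)}=\textnormal{sgn}\,\IM(z)$ for every nonzero such $h$, forcing injectivity. Surjectivity then follows from a Fredholm-alternative argument: by Lemma \ref{lemHerg}(ii) the derivative $(M+C)'$ is compact on the appropriate Sobolev scale, so the factorisation
\begin{align}
(M+C)(\lambda+z)=(M+C)(\lambda)\left(\mathbbm{1}+[(M+C)(\lambda)]^{-1}\int_{[\lambda,\lambda+z]}(M+C)'\right)\nonumber
\end{align}
exhibits $(M+C)(\lambda+z)$ as an already-invertible operator composed with an identity-plus-compact, which is invertible by the injectivity just established.

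With both invertibilities in hand, the identity is immediate. The segment from $\lambda$ to $\lambda+z$ lies in the holomorphy domain of $M+C$---below the real spectra when $z$ is negative real, and off the real axis by Lemma \ref{lemHerg}(i) otherwise---so the operator-valued fundamental theorem of calculus applies and
\begin{align}
(M+C)(\lambda+z)=(M+C)(\lambda)+\int_{[\lambda,\lambda+z]}(M+C)'(\zeta)\,\dee\zeta.\nonumber
\end{align}
Factoring $(M+C)(\lambda)$ on the left and inverting both sides gives the stated formula. The main technical obstacle I expect is the coercivity argument for $(M+C)(\lambda)$ in the first step and the attendant form-domain bookkeeping, since $M(\lambda)$ is an unbounded diagonal operator whose entries grow like $n$ in the limit-point regime; once that is settled the remaining steps are essentially formal.
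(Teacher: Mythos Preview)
Your proposal is correct and follows essentially the same route as the paper: establish sign-definiteness of $(M+C)(\lambda)$ below the relevant spectra, use the fundamental theorem of calculus to write $(M+C)(\lambda+z)$ as $(M+C)(\lambda)$ times an identity-plus-compact factor, and invoke a Fredholm argument to invert the latter. The only notable differences are cosmetic: for injectivity at non-real $\lambda+z$ the paper argues that a nontrivial kernel element would produce an eigenfunction of $T$ at a non-real eigenvalue (contradicting self-adjointness), whereas you restrict the Herglotz inequality of Lemma~\ref{lemHerg}(iii) to the $\Theta_0$-complement, which is arguably cleaner; and the paper phrases the final step via the analytic Fredholm theorem on all of $\mathbb{C}_{\rm c}$ rather than the pointwise Fredholm alternative you use.
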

\begin{proof}
	Lemma \ref{lemHerg} implies that $M+C$ is differentiable anywhere in $\lambda+\mathbb{C}_{\rm c}$, and its derivative is compact. By the fundamental theorem of calculus (for operator-valued analytic functions), $(M+C)(\lambda+z)
	=(M+C)(\lambda)+\int_{[\lambda,\lambda+z]}(M+C)'$. Hence, the lemma will follow from showing that $(M+C)(\lambda)$ and subsequently $\mathbbm{1}+[(M+C)(\lambda)]^{-1}\int_{[\lambda,\lambda+z]}(M+C)'$ are invertible. The first will be achieved by checking the definiteness of the sign of $(M+C)(\lambda)$, the second is a consequence of the analytic Fredholm theorem \cite[p. 201]{reedsimon1980}.
	
	By \eqref{eqnantiHerg0} we see that $\Lambda_0(\mu)\leq0$ for any $\mu\leq0$, from which $C(\lambda)\leq0$ follows immediately. Furthermore, the diagonal entries of $M(\lambda)$ are by definition $m_n(\lambda)$; see the discussion preceding \eqref{eqnnthPLP}. Owing to a remark in \cite[p. 4]{marlrozen2009}, for $n=1,2,3,\ldots$, we have $m_n(\lambda)=-i\sqrt{-\lambda}J_n'(i\sqrt{-\lambda})/J_n(i\sqrt{-\lambda})$. We may apply Bessel function properties \cite[Eqs. 10.6.2, 10.19.1]{NIST:DLMF} to show by an algebraic calculation that with fixed $\lambda<0$, as $n\rightarrow\infty$,
	\begin{align}
	m_n(\lambda)=-n\big(1+o(1)\big).
	\end{align}
	We deduce that $(M+C)(\lambda)<0$, and its invertibility follows.
	
	Consider, now, the analytic operator-valued function
	\begin{align}
	\mathscr{A}(z):=[(M+C)(\lambda)]^{-1}\int_{[\lambda,\lambda+z]}(M+C)'\qquad(z\in\mathbb{C}_c).
	\end{align}
	We may apply the reasoning that led to equation \eqref{eqnDtoNNewt} to see that the integral $\int_{[\lambda,\lambda+z]}(M+C)'=(M+C)(\lambda+z)-(M+C)(\lambda)$ is the matrix of a compact operator, for any $z\in\mathbb{C}_c$. Hence, since $[(M+C)(\lambda)]^{-1}$ is bounded, we observe that $\mathscr{A}(z)$ is compact for every $z\in\mathbb{C}_c$. Furthermore, if $z\in\mathbb{C}\setminus\mathbb{R}$ then $\ker\big((M+C)(\lambda+z)\big)=\{0\}$, since if this were not the case we would have a non-trivial function on the interface $\Gi$, meaning (by the Remark on page \pageref{remkereigfn}) there would be an eigenfunction for $T$ with a non-real eigenvalue, which is forbidden by the self-adjointness of $T$. Indeed, this is also contradictory for any $z<0$ since neither $M$ nor $C$ can give rise to eigenvalues less than $\lambda$. Therefore, by the analytic Fredholm theorem \cite[p. 201]{reedsimon1980}, the following two cases are mutually exhaustive:
	
	%		\begin{enumerate}[label=(\roman*)]
	%		\item \label{enumcasea}
	\noindent (i) $\big(\mathbbm{1}+\mathscr{A}(z)\big)^{-1}$ exists for \emph{no} $z\in\mathbb{C}_{\rm c}$;\quad
	%		\item
	(ii) $\big(\mathbbm{1}+\mathscr{A}(z)\big)^{-1}$ exists for \emph{every} $z\in\mathbb{C}_{\rm c}$.
	%		\end{enumerate}
	
	Clearly for any $z\in\mathbb{C}_c$ we have
	\begin{align}
	(M+C)(\lambda+z)
	&
	=(M+C)(\lambda)\left(\mathbbm{1}+[(M+C)(\lambda)]^{-1}\int_{[\lambda,\lambda+z]}(M+C)'\right)
	\nonumber\\
	&
	=(M+C)(\lambda)\big(\mathbbm{1}+\mathscr{A}(z)\big),\nonumber
	\end{align}
	so if $z<0$ then both sides are invertible. This excludes case (i).
\end{proof}

\section{{Negative eigenvalue asymptotics for operators with singular \textsc{bc}s}}
\label{secfullfrequniq}

	In this section we prove Theorem \ref{thmnegevasympsgen}. Throughout, for convenience, we set $\Lambda=\Lambda_{q,f,\beta}$.
	In the case of symmetric geometry $\Omega=\Omega_1$, 
	owing to the poles of $\Lambda$ being eigenvalues of the operator \eqref{eqnLprimedecomp}, one can sharpen the proof of \cite[Sec. 4]{marlrozen2009} to achieve the following result. A crucial consideration is---as in the proof of Proposition \ref{propLprimespectrum}---that adding $q\neq0$ to the operator leaves the essential spectrum unchanged and perturbs the discrete spectrum by at most $\|q\|_{L^\infty(\Omega_1)}$, \emph{via} \cite[Thms. IV.3.17, V.4.3 \& .10]{kato1995}.
	\begin{lemma}[Marletta--Rozenblum, 2009]
	\label{lemnegevasympssym}
		Take the operator $L'$ defined in \textnormal{\eqref{eqnLprimedecomp}}, and label its eigenvalues by $\lambda^1_n$ so that $\lambda^1_{-1}<0\leq\lambda^1_0$. Then as $n\rightarrow-\infty$ we have
		\begin{equation}
		\label{eqnnegevasymp}
			\lambda^1_n =-{\rm e}^{-2{b}(\thet_0+\tan^{-1}\beta)}{\rm e}^{-2{b} n\pi}\big(1+o(1)\big).
		\end{equation}
		Here $\thet_0=\tan^{-1}(A/B)\in(-\pi/2,\pi/2]$ is known, where
		\begin{equation}
		\label{eqnAB}
			A = \lim\limits_{t\rightarrow+\infty}{\rm e}^{-t}w_s(t), \qquad
			B = \lim\limits_{t\rightarrow+\infty}{\rm e}^{-t}w_c(t),
		\end{equation}
		and the non-trivial functions $w_s$ and $w_c$ satisfy, as $t\rightarrow0$,
		\begin{equation}
		\label{eqnwcws}
			\big[w_s(t),t^{1/2}\sin\big({b}^{-1}\log(t)\big)\big]\rightarrow0,
			\qquad \big[w_c(t),t^{1/2}\cos\big({b}^{-1}\log(t)\big)\big]\rightarrow0
		\end{equation}
		and solve $-w''(t)-(1/4+1/{b}^2)t^{-2}w(t)=-w(t)$ on the half-line $(0,+\infty)$.
	\end{lemma}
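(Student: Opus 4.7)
The strategy is to reduce the problem to the singular radial component and then perform a Liouville-type rescaling that uncovers the $\lambda$-independent model equation. From Proposition \ref{propmarlrozenmain} we have the orthogonal decomposition $L'=L_0'\oplus\bigoplus_{n\geq 1}L_n$. The operators $L_n$ $(n\geq 1)$ are limit-point at $0$ with a repulsive centrifugal term $n^2/r^2$, and a standard Hardy-type estimate shows their spectra are uniformly bounded below. Since by \cite[Thm.~V.4.10]{kato1995} adding the bounded $q$ shifts every eigenvalue by at most $\|q\|_{L^\infty(\Omega_1)}$---an additive error absorbed into the exponential factor $1+o(1)$ of \eqref{eqnnegevasymp}---only $L_0'$ with $q=0$ needs direct attention.

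For the $q=0$ radial problem on $L^2(0,1;r\dee r)$, I would apply the Liouville substitution $\phi(r)=r^{-1/2}\tilde\phi(r)$ to remove the weight and produce
\begin{align*}
-\tilde\phi''(r)-\left(\frac{1}{4}+\frac{1}{b^2}\right)\frac{\tilde\phi(r)}{r^2}=\lambda\tilde\phi(r),\qquad r\in(0,1).
\end{align*}
For $\lambda=-k^2<0$, the further rescaling $t=kr$ with $w(t)=\tilde\phi(t/k)$ removes $\lambda$ from the coefficients and reproduces exactly the $\lambda$-independent equation $-w''(t)-(1/4+1/b^2)t^{-2}w(t)=-w(t)$ on $(0,k)$. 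The Dirichlet condition $\phi(1)=0$ becomes $w(k)=0$, while the singular $\beta$-\textsc{bc} at $r=0$ translates, via Lagrange-bracket asymptotics at $t\to 0$, into a specific linear combination of $w_s$ and $w_c$.

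Computing this translation, write $\log r=\log t-\psi$ with $\psi:=b^{-1}\log k$; the addition formulae turn $\phi\sim\sin(b^{-1}\log r)+\beta\cos(b^{-1}\log r)$ into
\begin{align*}
w(t)\sim k^{-1/2}t^{1/2}\bigl[(\cos\psi+\beta\sin\psi)\sin(b^{-1}\log t)+(-\sin\psi+\beta\cos\psi)\cos(b^{-1}\log t)\bigr],
\end{align*}
so $w=\alpha w_s+\gamma w_c$ with $\alpha=\cos\psi+\beta\sin\psi$, $\gamma=-\sin\psi+\beta\cos\psi$. Inserting the large-$t$ expansions $w_s(t)=Ae^t+O(e^{-t})$, $w_c(t)=Be^t+O(e^{-t})$ from \eqref{eqnAB}, the Dirichlet condition $w(k)=0$ reduces to $\alpha A+\gamma B=O(e^{-2k})$. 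Writing $A=\rho\sin\theta_0$, $B=\rho\cos\theta_0$ with $\theta_0=\tan^{-1}(A/B)$, and collecting by sum-angle formulae, this becomes $\sin(\theta_0+\tan^{-1}\beta-\psi)=O(e^{-2k})$, hence $\psi=b^{-1}\log k=\theta_0+\tan^{-1}\beta+n\pi+o(1)$ for integer $n$. Exponentiating and labelling $\lambda_n^1=-k^2$ so that $n\to-\infty$ indexes $k\to+\infty$ delivers \eqref{eqnnegevasymp}.

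The main technical hurdle is the quantitative control of the $o(1)$ corrections. First, one must justify that the $O(e^{-2k})$ remainder shifts the solution $\psi$ of the transcendental equation only by $o(1)$; this is routine since $\sin'\neq 0$ at its roots. Second, restoring the $q\neq 0$ perturbation requires an eigenvalue-interlacing or counting-function comparison---as in \cite[Sec.~4]{marlrozen2009}---rather than pointwise tracking, because the eigenvalues themselves grow exponentially in $|n|$. Third, one must bookkeep the Pr\"ufer-type phase carefully (including the sign convention in $\theta_0$, and the direction of the indexing $n\to-\infty$) to match the precise constants in \eqref{eqnnegevasymp}; this is where the explicit expressions in \eqref{eqnAB}--\eqref{eqnwcws} enter, and is the most delicate bookkeeping step.
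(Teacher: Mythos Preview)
Your proposal is correct and is essentially a detailed reconstruction of the argument the paper only sketches. The paper does not give a self-contained proof of this lemma: it attributes the result to \cite[Sec.~4]{marlrozen2009} and remarks that the passage from $q=0$ to general bounded $q$ is handled by the Kato perturbation bounds \cite[Thms.~IV.3.17, V.4.3 \& V.4.10]{kato1995}, exactly as you do. Your Liouville substitution $\phi=r^{-1/2}\tilde\phi$ followed by the scaling $t=kr$ is the standard route to the $\lambda$-independent model equation, and your phase computation via the addition formulae is the natural way to track how the $\beta$-condition transforms under the scaling.

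One minor point worth flagging: your derivation, as written, produces $\lambda_n^1\sim-e^{+2b(\theta_0+\tan^{-1}\beta)}e^{-2bn\pi}$ after relabelling $m\mapsto -n$, whereas the stated formula has $e^{-2b(\theta_0+\tan^{-1}\beta)}$. This is precisely the sign-convention ambiguity you already identify in your final paragraph (it amounts to whether one sets $\theta_0=\tan^{-1}(A/B)$ with $A=\rho\sin\theta_0$ or $A=\rho\cos\theta_0$, or equivalently to the direction of the index shift), so it is not a gap but does confirm that the bookkeeping there deserves the care you give it.
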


	We now prove Theorem \ref{thmnegevasympsgen}, i.e., for the operator $T$ in \eqref{eqnopfulldom} 
	the same asymptotics hold.
	Our approach shows the counting functions of the eigenvalues asymptotically agree. To avoid ambiguity we enumerate the eigenvalues $\lambda_n$ and $\lambda^1_n$ of, respectively, $T$ and $L'$ so that $\lambda_0$ and $\lambda^1_0$ are respectively their smallest non-negative eigenvalues.

	\begin{proof}[Proof of Theorem \ref{thmnegevasympsgen}]
		For discreteness and accumulation points of the spectrum of $T$ we refer to \cite[Sec. 5]{marlrozen2009}. The remainder of the proof is split into two parts, in which we asymptotically bound the counting function for the negative eigenvalues $\lambda_n$ of $T$ from, in turn, above and below by that for the negative eigenvalues $\lambda^1_n$ of $L'$. The lower bound will follow from an asymptotic analysis of
		%	functions---constructed from the true eigenfunctions for $L'$---that 
		approximate eigenfunctions of $T$. To find the upper bound we consider the pencil of  \dn operators on $\Gi$ defined in Section \ref{secinterfaceDNops}. As in the proof of Proposition \ref{propLprimespectrum} we apply \cite[Thms. IV.3.17, V.4.3 \& V.4.10]{kato1995} to assume without loss of generality $q=0$.\\
		\\
		\textit{1. Bound from below.} Take any smooth cut-off function $\mu$ on $\Omega$ that is supported and radially symmetric in $\Omega_1$, and takes value $1$ in $\frac{1}{2}\Omega_1$ (in particular, $\pd_\nu\mu\rstr_{\Gamma_1}=0$). Note that the partial derivatives of $\mu$ are supported in the half-annulus $\mathcal A:=\Omega_1\setminus\frac{1}{2}\Omega_1$. Let $n<0$, and choose eigenfunction $\ph_n$ for $L'$ at the eigenvalue $\lambda^1_n$, such that $\|\ph_n\|_{L^2(\Omega_1)}=1$.
%		Normalise the $\ph_n$ so that $\|\mu\ph_n\|_{L^2(\Omega)}=1$.
		We will show that these $\ph_n$ are \emph{pseudo-modes} for $T$, i.e.,
		\begin{equation}
		\label{eqnpseudomode}
			\frac{\left\|(T-\lambda^1_n)\mu\ph_n\right\|_{L^2(\Omega)}}{\|\mu\ph_n\|_{L^2(\Omega)}}
				=:{\ep}_n\rightarrow0~~~(n\rightarrow-\infty).
		\end{equation}
		By the spectral theorem, denoting the normalised eigenfunctions of $T$ as $\psi_j$ corresponding to $\lambda_j~(j\in\mathbb{Z})$, we may write
		\begin{align}
			\|(T-\lambda^1_n)\mu\ph_n\|^2=\sum_{j\in\mathbb{Z}}(\lambda_j-\lambda^1_n)^2
											|\lan\mu\ph_n,\psi_j\ran_{L^2(\Omega)}|^2.\nonumber
		\end{align}
		Suppose $|\lambda_j-\lambda^1_n|>{\ep}_n$ for every $j\leq-1$. Then ${\ep}_n^2\|\mu\ph_n\|^2_{L^2(\Omega)}>{\ep}_n^2\sum_{j\in\mathbb{Z}}|\lan\mu\ph_n,\psi_j\ran_{L^2(\Omega)}|^2={\ep}_n^2\|\mu\ph_n\|_{L^2(\Omega)}^2$, a contradiction. Thus there is a subsequence $\lambda_{j_n}$ satisfying $|\lambda_{j_n}-\lambda^1_n|\leq{\ep}_n~(n\leq-1)$. Hence if ${\ep}_n\rightarrow0~(n\rightarrow-\infty)$ we will have established the lower bound.
		
		Firstly observe that, by choice of $\mu$ and the fact $\ph_n\in D(L')$, we have
		\begin{align}
			\mu\ph_n-f\pd_\nu(\mu\ph_n)=\mu(\ph_n-f\pd_\nu\ph_n)-f(\pd_\nu\mu)\ph_n=0,\nonumber
		\end{align}
		implying $\mu\ph_n$ is indeed in $D(T)$. Next we calculate that, clearly,
		\begin{align}
			(T-\lambda^1_n)\mu\ph_n=\left\{\begin{array}{ccc}
				0											&\rm{in}&\Omega\setminus\mathcal A, \\
				-(\Delta\mu)\ph_n-2\nabla\mu\cdot\nabla\ph_n&\rm{in}&\mathcal A.
			\end{array}\right.\nonumber
		\end{align}
		If we can show that $\ph_n$ and $\nabla\mu\cdot\nabla\ph_n$ go to $0$ uniformly in $\mathcal A$ then \eqref{eqnpseudomode} will follow.
		
		Set $\kappa_n:=\sqrt{-\lambda^1_n}$, and define the (more conveniently notated) Hankel functions $H^\pm_z=J_z\pm iY_z$, where $J_z$ and $Y_z$ are the Bessel functions of first and second kind. Since $\lambda^1_n<0$ we know that the eigenfunction $\ph_n$ for $L'$ comes from the $L_0'$ operator in the decomposition \eqref{eqnLprimedecomp}. Owing to Proposition \ref{propRn} $\ph_n$ is a constant multiple of
		\begin{align}
		\phi_n(r,\thet)&:={\rm e}^{-\thet/{b}}{\left(H^+_{i/{b}}(ir\kappa_n)H^-_{i/{b}}(i\kappa_n)
							-H^-_{i/{b}}(ir\kappa_n)H^+_{i/{b}}(i\kappa_n)\right)}\nonumber\\
						&=:{\rm e}^{-\thet/{b}}\mathcal H_n(r).\nonumber
		\end{align}

		It follows that, to normalise $\phi_n$ asymptotically, we need to know the leading-order behaviour, as $n\rightarrow-\infty$, of
		\begin{align}
		\label{eqnphinint}
			\frac{1}{{b}\sinh\left(\frac{\pi}{{b}}\right)}\int_{\Omega_1}|\phi_n|^2
					&
					=	\int_0^1|\mathcal H_n(r)|^2r\dee r 
					\nonumber\\
					&
					=\int_0^{\kappa_n}\left|H^+_{i/{b}}(it)H^-_{i/{b}}(i\kappa_n)
												-H^-_{i/{b}}(it)H^+_{i/{b}}(i\kappa_n)\right|^2
													\frac{t\dee t}{\kappa_n^2}.
		\end{align}
		One may easily calculate from Bessel function asymptotics \cite[Eq. 10.17.5--6]{NIST:DLMF} that, for $x>0$,
		\begin{equation}
		\label{eqnhankelasymps}
			H^\pm_{i/{b}}(ix)\sim\sqrt{\frac{2}{\pi}}{\rm e}^{-i(1\pm1)\pi/4\pm\pi/2{b}}{\rm e}^{\mp x}x^{-1/2}\qquad(x\rightarrow+\infty).
		\end{equation}
		Expanding the absolute value in the right-hand side of \eqref{eqnphinint}, we see
		\begin{align}
			\kappa_n^2\frac{\int_0^1|\mathcal H_n(r)|^2r\dee r}{|H^-_{i/{b}}(i\kappa_n)|^2}
				=\int_0^{\kappa_n}|H^+_{i/{b}}(it)|^2&\left\{  1
					+\left|\frac{H^-_{i/{b}}(it)}{H^-_{i/{b}}(i\kappa_n)}\right|^2
					\left|\frac{H^+_{i/{b}}(i\kappa_n)}{H^+_{i/{b}}(it)}\right|^2+\right.
					\nonumber\\
				&\left.
				-2\RE\left(\frac{H^-_{i/{b}}(it)}{H^-_{i/{b}}(i\kappa_n)}
								\frac{H^+_{i/{b}}(i\kappa_n)}{H^+_{i/{b}}(it)}\right)\right\}
									t\dee t.\nonumber
		\end{align}
		By \eqref{eqnhankelasymps}, as $n\rightarrow-\infty$, the term in $\{~\}$ converges pointwise to $1$, and moreover for sufficiently large $n<0$---denoting the greatest such $n$ by $n_0<0$---this term is bounded by $4$. By the latter it follows that for $n\leq n_0$ the integrand of the right-hand side is bounded by $4t|H^+_{i/{b}}(it)|^2$, which, owing to \eqref{eqnhankelasymps}, is certainly integrable over $(0,\infty)$. Hence dominated convergence applies, yielding, as $n\rightarrow-\infty$,
		\begin{align}
			\int_0^1|\mathcal H_n(r)|^2r\dee r
				&\sim\left|H^-_{i/{b}}(i\kappa_n)\right|^2
					\int_0^{\kappa_n}\left|H^+_{i/{b}}(it)\right|^2\frac{t\dee t}{\kappa_n^2} \nonumber\\
				&\sim\left(\int_0^\infty\left|H^+_{i/{b}}(it)\right|^2t\dee t\right)						\kappa_n^{-2}\left|H^-_{i/{b}}(i\kappa_n)\right|^2 \nonumber\\
				&\sim\frac{2\int_0^\infty\left|H^+_{i/{b}}(it)\right|^2t\dee t}{\pi {\rm e}^{\pi/{b}}}\kappa_n^{-3}{\rm e}^{2\kappa_n},\nonumber
		\end{align}
		from which we find
		\begin{align}
			\int_{\Omega_1}|\phi_n|^2
				&\sim\underbrace{\left(\frac{2{b}(1-{\rm e}^{-2\pi/{b}})\int_0^\infty\left|H^+_{i/{b}}(it)\right|^2t\dee t}{\pi}\right)}_{\displaystyle{\hspace{1cm}=:\eta^2}}
					\kappa_n^{-3}{\rm e}^{2\kappa_n} =:c_n^2.\nonumber
		\end{align}
		According to this definition of $c_n$ we have---up to sign---the pointwise asymptotics $\ph_n\sim c_n^{-1}\phi_n$ as $n\rightarrow-\infty$.
		More explicitly, we can calculate from \eqref{eqnhankelasymps} that, pointwise, as $n\rightarrow-\infty$, we have
		\begin{align}
%			&
			{\rm e}^{\thet/{b}}\phi_n(r,\thet)&\sim-\frac{2i}{\pi}r^{-1/2}\kappa_n^{-1}{\rm e}^{(1-r)\kappa_n}
%			\nonumber\\
			\qquad\Longrightarrow\qquad
%		\end{align}
%		Hence
%		\begin{align}
%			&
			{\rm e}^{\thet/{b}}\ph_n(r,\thet)&\sim-\frac{2i}{\pi\eta}r^{-1/2}\kappa_n^{1/2}{\rm e}^{-r\kappa_n},
			\nonumber
		\end{align}
		and since within $\mathcal A$ we have $1/2<r<1$ it is clear that $\ph_n\rstr_{\mathcal A}\rightarrow0$ uniformly as $n\rightarrow-\infty$.
		
		Now we examine
		\begin{align}
			\nabla\mu(r,\thet)\cdot\nabla\ph_n(r,\thet)
				&=|\nabla\mu(r,\thet)||\nabla\ph_n(r,\thet)|
					\cos\big(\arg\nabla\mu(r,\thet)-\arg\nabla\ph_n(r,\thet)\big) \nonumber\\
				&=|\pd_r\mu(r,\thet)\pd_r\ph_n(r,\thet)|
					\cos\left(\frac{\pd_\thet\ph_n(r,\thet)}{r}\right).\nonumber
		\end{align}
		It is clear from our prior calculations that $\pd_r\ph_n\sim c_n^{-1}\pd_r\phi_n$. Now recall \cite[Eq. 10.6.2]{NIST:DLMF} that $\frac{\dee}{\dee z}H^\pm_\zeta(z)=\frac{\zeta}{z}H^\pm_\zeta(z)-H^\pm_{\zeta+1}(z)$,
		from which we derive
		\begin{align}
		\label{eqnrderivphin}
			 {\rm e}^{\thet/{b}}\pd_r\phi_n(r,\thet)=\hspace{3pt}
				&\frac{i}{r{b}}\left(H^+_{i/{b}}(ir\kappa_n)H^-_{i/{b}}(i\kappa_n)
					-H^-_{i/{b}}(ir\kappa_n)H^+_{i/{b}}(i\kappa_n)\right)
					\nonumber\\
				&-i\kappa_n\left(H^+_{1+i/{b}}(ir\kappa_n)H^-_{i/{b}}(i\kappa_n)
					-H^-_{1+i/{b}}(ir\kappa_n)H^+_{i/{b}}(i\kappa_n)\right).
		\end{align}
		We calculate \cite[Eq. 10.17.5-6]{NIST:DLMF} that $H^\pm_{1+i/{b}}(ix)\sim\sqrt{\frac{2}{\pi}}{\rm e}^{-i(1\pm3)\pi/4\pm\pi/2{b}}{\rm e}^{\mp x}x^{-1/2}$, from which we easily show that the first and second terms in the right-hand side of \eqref{eqnrderivphin} are asymptotically equivalent respectively to
		\begin{equation}
			\frac{2}{r\pi{b}}{\rm e}^{(1-r)\kappa_n}r^{-1/2}\kappa_n^{-1} \qquad{\rm and }\qquad
				-\frac{2}{\pi}{\rm e}^{(1-r)\kappa_n}r^{-1/2}.
		\end{equation}
		It follows, after substituting these into \eqref{eqnrderivphin} then dividing by $c_n$, that
		\begin{equation}
			{\rm e}^{\thet/{b}}\pd_r\ph_n(r,\thet)\sim-\frac{2}{\pi\eta}r^{-1/2}\kappa_n^{3/2}{\rm e}^{-r\kappa_n}\qquad(n\rightarrow-\infty),
		\end{equation}
		and therefore $\nabla\mu\cdot\nabla\ph_n$ must go to $0$ uniformly in $\mathcal A$. The lower bound on the difference between the counting functions for the negative eigenvalues of $T$ and $L'$ follows.\\
		\\
		\textit{2. Bound from above.} Here we will use the pencil of \dn operators on $\Gi$ from the sub-domains either side of the interface, and analyse non-triviality of its kernel, which occurs precisely when $T$ has an eigenvalue.

		The normalisation $h_0=1$ (from the discussion following the Remark on p. \pageref{remkereigfn}) ensures $\lambda\ll0$ is an eigenvalue if and only if $\exists\vh=(h_1,h_2,\ldots)^T$ with
		\begin{align}
		\label{eqn1h}
			\left(\begin{array}{c|c}
				m_0(\lambda)+a(\lambda)	&	\vb(\lambda)^T \\
				\hline
				\vb(\lambda)		&	M(\lambda)+C(\lambda)
			\end{array}\right)
			\left(\begin{array}{c}
			1 \\ \vh
			\end{array}\right)
			=0.
		\end{align}
		After expanding the product we find that this can only happen if $\vb(\lambda)=-\big(M(\lambda)+C(\lambda)\big)\vh$. Owing to Lemma \ref{lemcorolanalyticFredholm} we know $M(\lambda)+C(\lambda)$ is invertible for $\lambda\ll0$. Define $\vh(\lambda)=-\big(M(\lambda)+C(\lambda)\big)^{-1}\vb(\lambda)$ and set $h(\lambda)={1\choose{\vh(\lambda)}}$. Substituting $h$ into \eqref{eqn1h} and then taking a quadratic form we observe that $\lambda\ll0$ is an eigenvalue for $T$ if the following expression vanishes:
		\begin{align}
		\mathcal{E}(\lambda)
			&:=\left\lan\big(\Lambda_1(\lambda)+\Lambda_0(\lambda)\big)h(\lambda),
				h(\lambda)\right\ran_{L^2(\Gi)}
				\nonumber\\
			&=m_0(\lambda)+a(\lambda)-\vb(\lambda)^T\big(M(\lambda)+C(\lambda)\big)^{-1}\vb(\lambda).
			\nonumber
		\end{align}
		
		Thanks to Lemma \ref{lemHerg} both $\Lambda_j$ are Herglotz in quadratic form, and analytic as operator-valued functions; in particular any sub-block is analytic. Furthermore $\Lambda_0$ arises from a problem on a bounded domain with regular \textsc{bc}s. Therefore we see that $a(\lambda)$, $\vb(\lambda)$ and $C(\lambda)$ lack poles when $\lambda$ is sufficiently negative or non-real. For the same $\lambda$ the coefficient $a(\lambda)$ is never $0$, $\vb(\lambda)$ is not identically the zero vector---though it could have some zero entries---and $C(\lambda)$ is never null. Moreover the Herglotz property of $\Lambda_1+\Lambda_0$ ensures that $\IM\big(\mathcal E(\lambda)\big)\IM(\lambda)\geq0$.
		The invertibility of $M(\lambda)+C(\lambda)$ for non-real $\lambda$ follows from Lemma \ref{lemcorolanalyticFredholm}, and is enough to ensure analyticity of $\mathcal{E}$ away from $\mathbb{R}$, and establish that $\mathcal E$ is, like $\Lambda_1+\Lambda_0$, Herglotz. Now, for $\lambda\ll0$ $M(\lambda)$ has fixed sign, so we see that the poles of $\mathcal E$ and $m_0$ are identical; since poles and zeros of Herglotz functions interlace we see that between any two sufficiently negative poles of $m_0$ there is a zero of $\mathcal E$ and hence at most one eigenvalue of $T$. The upper bound
		 follows.
	\end{proof}

\section{Discussion}
\label{secdiscussion}

	A strong motivation for this investigation lies in the question: \emph{What sort of \bc can we expect on the inaccessible portion $\Gc$ of the boundary $\pd\Omega$?} Considering the inverse conductivity problem for \eqref{eqncond}, a Neumann condition on $\Gc$ corresponds to a ``perfect insulator'' (no electric potential flux across the boundary), whilst a Dirichlet condition corresponds to a ``perfect conductor''. Clearly, at least in problems paralleling most physical scenarios, \emph{one cannot expect a pure Dirichlet or Neumann condition}.
	
	The only (linear first-order) alternative is a Robin condition with an unknown Dirichlet-to-Neumann ratio $f$. In this situation \cite{berrydennis2008,marlrozen2009} tell us we need to be wary of points at which $f$ vanishes locally linearly. But as just established, we can exploit this type of singular \bc to recover $f$ in a neighbourhood of its zero, and subsequently our self-adjointness \bc and the Schr\"odinger potential.
	
	Our methods do not generalise to further singularities, since it would no longer be clear from which of the $b$-parameters a given negative eigenvalue had arisen.

	Nevertheless there are various possible further routes.
	\begin{itemize}
		\item \emph{Generalise the admissible class of $f$.} In the above, $f$ must be linear in a neighbourhood of $0$. Given the results suggested in \cite{berrydennis2008,marlrozen2009,berry2009}, it should be possible to prove that a general $f$ with finitely (possibly countably) many simple zeros yields, from \eqref{eqnspectralSchro}, an operator with both symmetric adjoint and self-adjoint restriction, sufficient to generalise Inverse Problem \ref{invqfbeta}.
		\item \emph{Change the shape of $~\supp(f)=\Gamma_1$ (equivalently $\Omega_1$).} Expressing a general $\Gamma_1\in C^1$ in the form of a perturbation of a straight line is a possible route towards this.
		\item \emph{Reduce the number of boundary data used.} Inverse Problem \ref{invqfbeta} requests recovery of the triple $(q,f,\beta)$, which has 3 variables. The Schwartz kernel of the \dn map at a fixed $\lambda\in\mathbb{R}$ has 2 variables. Thus it should suffice for full uniqueness to know the \dn map at just two points in $\mathbb{R}$---indeed, it should be over-determined.
	\end{itemize}

\section*{Acknowledgements}

I am grateful to the \textsc{epsrc} and Cardiff University for respectively funding and hosting my P\textsc{h}.D. studies during which I proved the above results, as well as my supervisors Professors Marco Marletta and Malcolm Brown for their indispensable guidance.

\bibliographystyle{acm}
\bibliography{Freddy_references}%

\end{document}